\begin{document}

\title[Optimal Thresholds for Monotone Non-Boolean Functions]{Optimal Thresholds for Monotone Non-Boolean Functions}

\author{Saba Lepsveridze} \address{Massachusetts Institute of Technology \\ 77 Massachusetts Ave \\ Cambridge, MA 02139-4301} \email{sabal@mit.edu}

\author{Allen Lin} \address{Massachusetts Institute of Technology \\ 77 Massachusetts Ave \\ Cambridge, MA 02139-4301} \email{allenees@mit.edu}

\begin{abstract}
    Let $[q] = \{0,1,\ldots,q-1\}$, let $\Delta[q]$ denote the simplex of probability measures on $[q]$, and let $\gamma$ denote the Lebesgue measure normalized on $\Delta[q]$. We prove that for any symmetric monotone function $f \colon[q]^n \to [q]$ and any $a \in [q]$ we have
    \begin{equation*}
        \gamma(\{\mu \in \Delta[q]\;\vert\;\mathbb{P}_{x\sim\mu^{\otimes n}}[f(x)=a] \in (\varepsilon,1-\varepsilon)\}) = O(1/\log n)\text{.}
    \end{equation*}
    We also show that this bound is tight. This improves Kalai and Mossel's previous bound of $O(\log \log n/\log n)$ and answers their question completely.
\end{abstract}

\maketitle

\section{Introduction}

It is well known that monotone symmetric Boolean functions undergo sharp thresholds. This sharp threshold property is prevalent and has numerous applications to random graph theory, social choice theory, and percolation theory. For example, let $\pi_p$ be the distribution on $\{0,1\}$ where $1$ is chosen with probability $p$. Friedgut and Kalai \cite{FriedgutKalai1996} proved that for every $0 < \varepsilon < 1/2$, there exists a constant $C(\varepsilon)$ such that for all $n \geq 2$ and all monotone and symmetric functions $f \colon \{0,1\}^n \to \{0,1\}$, if $\mathbb{E}_{x \sim \pi_p^{\otimes n}}[f(x)] = \varepsilon$ and $\mathbb{E}_{x \sim \pi_q^{\otimes n}}[f(x)] = 1 - \varepsilon$ for some $q > p$, then
\begin{equation*}
    q - p < \frac{C(\varepsilon)}{\log n}\text{.}
\end{equation*}
This result implies that all monotone graph properties undergo sharp thresholds.

Naturally, one might consider functions with more generalized domains and ask if they also undergo sharp thresholds. Let $A$ be a finite set. Here, we consider functions $f \colon A^n \to A$ that are monotone and symmetric. We give the definitions below.

\subsection{Notation}

Following Kalai and Mossel \cite{KalaiMossel2015}, let $S(n)$ denote the group of permutations on elements of the set $\{1,\ldots,n\}$. For $\sigma \in S(n)$ and $x \in A^n$, let $y = x_\sigma$ denote the vector satisfying $y_i = x_{\sigma(i)}$ for all $i \in \{1, \ldots, n\}$. In other words, $y$ is the vector formed by permuting the bits of $x$ according to $\sigma$. 

Moreover, for $a \in A$ and $x, y \in A^n$, we define a partial order $\leq_a$ on $A^n$ as follows. We write $x \leq_a y$ if and only if $\{i \;\vert\;x_i = a\} \subseteq \{i \;\vert\; y_i =a \}$ and for all $i \in \{1, \ldots, n\}$ such that $y_i \neq a$ we have $x_i = y_i$. That is, if $x \leq_a y$, then $y$ can be obtained from $x$ by changing some bits in $x$ to $a$.

Finally, throughout the paper $\log n$ denotes $\log_2 n$. We use the following definitions from Kalai and Mossel \cite{KalaiMossel2015}.

\begin{definition}
    A function $f \colon A^n \to A$ is \emph{monotone} if for all $a \in A$ and $x,y \in A^n$ such that $x \leq_a y$, $f(x) = a$ implies $f(y) = a$.
\end{definition}

\begin{definition}\label{def:symmetric}
    A function $f \colon A^n \to A$ is \emph{symmetric} if there exists some transitive group $\Sigma \subseteq S(n)$ such that $f(x_\sigma) = f(x)$ for all $x \in A^n$ and $\sigma \in \Sigma$.
\end{definition}

We note that other authors refer to such functions described in Definition \ref{def:symmetric} as \emph{transitive symmetric} and define $f \colon A^n \to A$ to be \emph{symmetric} if $f(x_\sigma) = f(x)$ for all $x \in A^n$ and all $\sigma \in S(n)$.

\subsection{Main Result}

Let $\Delta[A]$ denote the simplex of probability measures on $A$, and let $\gamma$ denote the Lebesgue measure normalized on $\Delta[A]$. Let $f \colon A^n \to A$ be some symmetric and monotone function. Kalai and Mossel \cite{KalaiMossel2015} proved that there exists some constant $C=C(|A|)$ such that
\begin{equation*}
    \gamma(\{\mu \in \Delta[A]\;\vert\;\varepsilon \leq \mathbb{P}_{x\sim\mu^{\otimes n}}[f(x) = a] \leq 1 - \varepsilon\}) \leq C(\log(1-\varepsilon)-\log(\varepsilon))\frac{\log \log n}{\log n}\text{,}
\end{equation*}
leaving a gap of $\log\log n$ compared to the Boolean case when $|A| = 2$. In their paper, Kalai and Mossel \cite{KalaiMossel2015} conjectured that this can be improved to $O(1/\log n)$ as in the Boolean case. In this paper, we prove this conjecture.

\begin{theorem}\label{thm:main}
    There exists a constant $C = C(|A|)$ such that if $f \colon A^n \to A$ is symmetric and monotone, then
    \begin{equation}\label{eqn:main}
        \gamma(\{\mu \in \Delta[A]\;\vert\;\varepsilon \leq \mathbb{P}_{x\sim\mu^{\otimes n}}[f(x) = a] \leq 1 - \varepsilon\}) \leq \frac{C(\log(1-\varepsilon)-\log(\varepsilon))}{\log n}
    \end{equation}
    for any $a \in A$ and any $0 < \varepsilon < 1/2$.
\end{theorem}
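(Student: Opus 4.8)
The plan is to reduce the non-Boolean statement to the Boolean Friedgut–Kalai theorem by a fibering argument. Fix $a \in A$ and consider the ``indicator'' function $g \colon A^n \to \{0,1\}$ given by $g(x) = \mathbf{1}[f(x) = a]$. Monotonicity of $f$ with respect to $\leq_a$ says exactly that $g$ is monotone with respect to the partial order in which flipping a coordinate to $a$ only increases $g$; symmetry of $f$ gives that $g$ is invariant under the same transitive group $\Sigma$. The key observation is that under $\mu^{\otimes n}$ the relevant parameter is $p := \mu(a)$, the mass $\mu$ places on the symbol $a$, together with the conditional distribution $\nu$ of $\mu$ on $A \setminus \{a\}$. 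Conditioning on the conditional law $\nu$ of the non-$a$ coordinates, the function $g$ becomes a monotone symmetric Boolean function of independent $\{0,1\}$ coordinates (coordinate $i$ is $1$ iff $x_i = a$, which has probability $p$), so by Friedgut–Kalai the set of $p$ for which $\Prob[g=1] \in [\varepsilon, 1-\varepsilon]$ is an interval of length $O\bigl((\log(1-\varepsilon)-\log\varepsilon)/\log n\bigr)$, with the constant \emph{uniform in $\nu$}.

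Next I would integrate over the simplex. Parametrize $\Delta[A]$ by $(p, \nu)$ where $p \in [0,1]$ and $\nu \in \Delta[A \setminus \{a\}]$, so that $\mu = p \delta_a + (1-p)\nu$; the Lebesgue measure $\lambda$ on $\Delta[A]$ disintegrates as $\lambda = \int (1-p)^{|A|-2}\, dp \otimes (\text{scaled Lebesgue on } \Delta[A\setminus\{a\}])$ up to a normalizing constant depending only on $|A|$. For each fixed $\nu$, the Boolean result bounds the length of the $p$-slice of the target set by $C'(\log(1-\varepsilon)-\log\varepsilon)/\log n$. Therefore, by Fubini,
\begin{equation*}
    \lambda(\{\mu : \Prob_{x\sim\mu^{\otimes n}}[f(x)=a] \in [\varepsilon,1-\varepsilon]\}) \leq \int_{\Delta[A\setminus\{a\}]} C' \cdot \frac{\log(1-\varepsilon)-\log\varepsilon}{\log n} \, d\nu \cdot \mathrm{vol}(\cdots)\textrm{,}
\end{equation*}
which collapses to $C(|A|)(\log(1-\varepsilon)-\log\varepsilon)/\log n$ since the $\nu$-integral over a bounded region contributes only an $|A|$-dependent constant.

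The main obstacle is establishing that the Boolean sharp-threshold bound applies with a constant independent of the conditional distribution $\nu$, and more subtly, that after conditioning we really do get a genuinely monotone \emph{symmetric} Boolean function to which Friedgut–Kalai applies. Symmetry is the delicate point: the transitive group $\Sigma$ acts on coordinates, but once we condition on the values $x_i$ for $i$ in the non-$a$ part we have frozen an asymmetric pattern, so the conditioned function need not be symmetric. The correct move is \emph{not} to condition on the actual non-$a$ values but rather to keep them random: define, for the pair $(p,\nu)$, the Boolean function by revealing only whether each coordinate equals $a$; then the distribution of $g$ depends on $p$ and on the law of $f$ restricted to the event structure, and one shows monotonicity and symmetry of the resulting averaged Boolean function $\bar g_\nu(z) = \Prob_{x}[f(x) = a \mid \mathbf{1}[x_i=a] = z_i]$. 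Here $\bar g_\nu$ is a monotone symmetric function $\{0,1\}^n \to [0,1]$, and one applies (or re-derives) the Friedgut–Kalai argument for real-valued monotone symmetric functions, which goes through verbatim via the Russo–Margulis formula and the symmetry-based lower bound on the total influence. Checking these two structural properties carefully, and tracking that the Friedgut–Kalai constant depends only on $\varepsilon$ through $\log(1-\varepsilon)-\log\varepsilon$ and not on $n$ or $\nu$, is the crux; the rest is bookkeeping with the disintegration of Lebesgue measure on the simplex.
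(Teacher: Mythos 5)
There is a genuine gap, and it lies exactly where you flag ``the crux'': the claim that the Friedgut--Kalai sharp-threshold argument ``goes through verbatim'' for real-valued monotone symmetric functions $\bar g_\nu \colon \{0,1\}^n \to [0,1]$ is false. The Boolean structure is essential to KKL-type lower bounds on total influence. For a Boolean monotone $g$, the discrete derivative $D_k g \in \{0,1\}$, so $\|D_k g\|_2^2 = \|D_k g\|_1$ and the Talagrand/KKL inequality produces the crucial $\log n$ factor. For $[0,1]$-valued $g$ this fails. Concretely, take $g(z) = \frac{1}{n}\sum_i z_i$: this is monotone, fully symmetric, and $\E_{\pi_p}[g] = p$, so the set of $p$ with $\E_{\pi_p}[g] \in [\varepsilon,1-\varepsilon]$ has length $1-2\varepsilon$, a constant, not $O(1/\log n)$. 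So a ``real-valued Friedgut--Kalai theorem'' is simply not true, and your proof would need to exploit the specific structure of $\bar g_\nu$ as a conditional expectation of a Boolean $f$, which you do not do. (And if one merely tries to repair the argument using the general Talagrand inequality with $\|D_k g\|_2/\|D_k g\|_1$ terms, the dependence on the conditional law $\nu$ reappears through the small atoms of $\nu$, which is precisely the obstacle Kalai and Mossel faced and why their bound was $O(\log\log n/\log n)$ rather than $O(1/\log n)$.)

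The paper avoids this trap by not collapsing $[q]^n$ to $\{0,1\}^n$ at all. It keeps $f\colon [q]^n \to \{0,1\}$ Boolean-valued on the full $q$-ary product space and applies Keller's $h$-influence generalization of BKKKL directly to a transplant $F\colon[0,1]^n\to\{0,1\}$ of $f$, choosing $h(t) = 2\,\mathbb{I}[t\in(0,1)](1-t)(1-\log(1-t)) \geq \mathrm{Ent}(t)$. Combined with a generalized Russo--Margulis formula, this yields a derivative lower bound depending only on the \emph{second-smallest} atom $\alpha$ of $\mu$ (since $\mu(0)=0$ along the segment $\mu_t$). Partitioning by which coordinate achieves this second-smallest atom and integrating $\int_0^1 \log(1/s)\,ds < \infty$ over that atom's value removes the $\alpha$-dependence, giving the clean $O(1/\log n)$ bound. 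Your Fubini/disintegration bookkeeping at the end is fine in spirit, but the central lemma it rests on is not available.
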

Furthermore, for any $\mu \in \Delta[A]$, let $E(\mu)$ be the event that there exists some $a \in A$ such that $\mathbb{P}_{x\sim\mu^{\otimes n}}[f(x)=a] \in (\varepsilon,1-\varepsilon)$. Note that by the union bound, we also have
\begin{equation*}
    \gamma(\{\mu \in \Delta[A]\;\vert\;E(\mu)\;\text{is true}\}) \leq \frac{C(\log(1-\varepsilon)-\log(\varepsilon))}{\log n}\text{.}
\end{equation*}
As such, all bounds of applications present in Kalai and Mossel \cite{KalaiMossel2015} such as monotone graph properties and Condorcet's Jury Theorem are improved to $O(1/\log n)$.

\subsection*{Acknowledgements} The authors would like to thank Elchanan Mossel for suggesting this problem and for the many helpful discussions. Furthermore, the authors would like to thank the referees for their invaluable feedback and comments, which greatly improved the quality of the paper. S.L. was partially supported by the NSF–Simons Research Collaboration Grant (Award No.~2031883).

\section{Previous Sharp Threshold Results}

\subsection{Friedgut and Kalai's Result for \texorpdfstring{$|A| = 2$}{q = 2}}

Friedgut and Kalai \cite{FriedgutKalai1996} proved the following sharp threshold phenomenon for $|A| = 2$. Here let $A = \{0,1\}$. 
\begin{theorem}[Friedgut and Kalai \cite{FriedgutKalai1996}]\label{thm:friedgut-kalai}
    There exists a constant $C$ such that if $f \colon \{0,1\}^n \to \{0,1\}$ is symmetric and monotone, then
    \begin{equation*}
        \gamma(\{\mu \in \Delta[A]\;\vert\;\varepsilon \leq \mathbb{E}_{x\sim \mu^{\otimes n}}[f(x)] \leq 1-\varepsilon\}) \leq \frac{C\log(1/\varepsilon)}{\log n}
    \end{equation*}
    for any $0 < \varepsilon < 1/2$.
\end{theorem}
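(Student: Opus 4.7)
The plan is to follow the standard Friedgut--Kalai scheme. Write $\mu(p) := \E_{x\sim\pi_p^{\otimes n}}[f(x)]$. Since $f$ is monotone, $\mu$ is non-decreasing in $p$, so the set $\{p \in [0,1] : \mu(p) \in [\varepsilon, 1-\varepsilon]\}$ is an interval $[p_1,p_2]$ with $\mu(p_1)=\varepsilon$ and $\mu(p_2)=1-\varepsilon$ (or empty, in which case the claim is trivial). The goal is to establish a differential inequality of the form $\mu'(p) \geq c\,\mu(p)(1-\mu(p))\log n$ and integrate it over $[p_1,p_2]$.

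The first ingredient is the Margulis--Russo identity: for monotone $f \colon \{0,1\}^n \to \{0,1\}$, $\mu'(p) = \sum_{i=1}^n I_p(i)$, where $I_p(i)$ denotes the $p$-biased influence of coordinate $i$ (the probability under $\pi_p^{\otimes n}$ that coordinate $i$ is pivotal for $f$). By transitivity of the symmetry group $\Sigma$, all influences are equal, so $\mu'(p) = n\cdot I_p(1)$.

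The second, and crucial, ingredient is a KKL-type lower bound on influences in the $p$-biased setting: there exists an absolute constant $c > 0$ such that for every Boolean $f$ and every $p \in (0,1)$,
\[
\max_i I_p(i) \;\geq\; c\cdot\frac{\mu(p)(1-\mu(p))\log n}{n}.
\]
Combined with transitivity, this yields $\mu'(p) \geq c\,\mu(p)(1-\mu(p))\log n$. Separating variables and integrating from $p_1$ to $p_2$ gives
\[
c(p_2-p_1)\log n \;\leq\; \int_{p_1}^{p_2} \frac{\mu'(p)\,dp}{\mu(p)(1-\mu(p))} \;=\; \int_\varepsilon^{1-\varepsilon}\frac{du}{u(1-u)} \;=\; 2\log\frac{1-\varepsilon}{\varepsilon}.
\]
Since $\log((1-\varepsilon)/\varepsilon) = O(\log(1/(2\varepsilon)))$ for $\varepsilon \in (0,1/2)$, rearranging gives the desired bound $p_2 - p_1 \leq C\log(1/(2\varepsilon))/\log n$.

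The main obstacle is the KKL-type lower bound of step three. The uniform-measure version (Kahn--Kalai--Linial) is standard, but obtaining it uniformly over $p \in (0,1)$ is more delicate, since the threshold location of $f$ can lie anywhere in the unit interval. The standard route is via the Bourgain--Kahn--Kalai--Katznelson--Linial theorem, or Talagrand's weighted $L^1$-$L^2$ inequality $\sum_i I_p(i)/\log(1/I_p(i)) \geq c\,\mu(p)(1-\mu(p))$, combined with the observation that if $I_p(1) \geq 1/\sqrt{n}$ then $\mu'(p) \geq \sqrt{n}$ and the integration is immediate, while otherwise $\log(1/I_p(1)) \geq (1/2)\log n$ and the weighted inequality, applied with all influences equal, produces the claimed bound.
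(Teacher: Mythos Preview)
Your proposal is correct and follows essentially the same route as the paper: Russo--Margulis gives $\mu'(p)=\sum_k I_k(f;\pi_p^{\otimes n})$, symmetry equalizes the influences, and the BKKKL theorem (Theorem~\ref{thm:BKKKL}) supplies the $p$-biased lower bound $\max_k I_k \geq c\Var(f)\log n/n$, yielding $\mu'(p)\geq c\,\mu(p)(1-\mu(p))\log n$, which one integrates. The paper invokes BKKKL directly rather than your Talagrand-based dichotomy in the final paragraph, but since you list BKKKL as your primary route this is the same argument.
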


Fundamental to the proofs of these threshold results is the notion of influences. Influences of variables of Boolean functions have been extensively studied and have led to applications in combinatorics, theoretical computer science, and other areas. There are various definitions for influences, all based on dividing the domain into one-dimensional subspaces called \emph{fibres}. For the sake of discussion, we consider general product probability spaces $X = X_1 \times \cdots \times X_n$. 

\begin{definition}
    For any $x = (x_1, \ldots, x_n) \in X$ and any $1 \leq k \leq n$, the \emph{fibre} of $x$ in the $k$th direction is
    \begin{equation*}
        s_k(x) = \{y \in X \;\vert\; y_i = x_i \;\text{for all}\;i \neq k\}\text{.}
    \end{equation*}
    For a function $f \colon X \to \{0,1\}$, the restriction of $f$ to $s_k(x)$ is denoted $f_k^x \colon X_k \to \{0,1\}$ and is defined by
    \begin{equation*}
        f_k^x(t) = f(x_1, \ldots, x_{k-1},t,x_{k+1}, \ldots, x_n)\text{.}
    \end{equation*}
\end{definition}

The original definition of influence in general product spaces was introduced by Bourgain, Kahn, Kalai, Katznelson, and Linial \cite{BKKKL} as follows.

\begin{definition}\label{def:BKKKL-influence}
    For any function $f \colon X \to \{0,1\}$ and any $1 \leq k \leq n$, the \emph{influence} of the $k$th variable on $f$ is
    \begin{equation*}
        I_k(f) = \mathbb{P}_{x \in X}[f_k^x\;\text{is not constant}]\text{.}
    \end{equation*}
    In particular, if $f \colon \{0,1\}^n \to \{0,1\}$ is endowed with the product measure $\mu^{\otimes n}$, then
    \begin{equation*}
        I_k\left(f;\mu^{\otimes n}\right) = \mathbb{P}_{x \sim \mu^{\otimes n}}[f_k^x\;\text{is not constant}]\text{.}
    \end{equation*}
\end{definition}

The most well-known theorem regarding influences in product spaces is the BKKKL theorem \cite{BKKKL}.
\begin{theorem}[Bourgain, Kahn, Kalai, Katznelson, and Linial \cite{BKKKL}]\label{thm:BKKKL}
    There exists a universal constant $c$ such that for any function $f \colon X \to \{0,1\}$, there exists a coordinate $k$ such that
    \begin{equation*}
        I_k(f) \geq c\Var(f)\log n/n\text{.}
    \end{equation*}
\end{theorem}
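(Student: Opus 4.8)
The plan is to push the problem onto the uniform measure on a Boolean hypercube, where the hypercontractive (Bonami--Beckner) machinery behind the Kahn--Kalai--Linial theorem is available, and to carry out that reduction without losing the factor $\log n$. First I would reduce to $X = [0,1]^n$ with Lebesgue measure: each $X_i$ may be assumed to be a standard probability space, so there is a measure-preserving map $\phi_i \colon ([0,1],\lambda) \to X_i$, and then $g = f \circ (\phi_1 \times \cdots \times \phi_n)$ has the same distribution as $f$, hence the same variance, while a fibre of $g$ in direction $k$ is non-constant exactly when the image fibre of $f$ is, so $I_k(g) = I_k(f)$ for every $k$. Thus I may assume $f = \mathbf 1_A$ for a measurable set $A \subseteq [0,1]^n$, and I write $t = \Var(f)$.

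Next I would discretize. Fix a scale $m = \Theta(\log n)$ and, in each coordinate, partition $[0,1]$ into the $2^m$ dyadic intervals of length $2^{-m}$; a function of the resulting product grid is a Boolean function on $\{0,1\}^{mn}$, organized into $n$ blocks of $m$ bits. The aim is to produce from $f$ a Boolean function $h$ on this cube together with estimates of the form
\begin{equation*}
\Var(h) \ \geq\ c_1\, t \qquad\text{and}\qquad I_k(f) \ \geq\ c_2 \cdot \bigl(\text{influence of the $k$th $m$-bit block on } h\bigr)\quad\text{for all } k .
\end{equation*}
Taking $m = \Theta(\log n)$ keeps $mn = n^{1+o(1)}$, so that $\log(mn) = (1+o(1))\log n$ and no logarithm is sacrificed. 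I would then run the KKL argument on $\{0,1\}^{mn}$ in grouped (block) form: assuming every block-influence of $h$ is at most $\delta$, apply Bonami--Beckner to the block-derivatives $h - \E_{B_k} h$, bound the low-degree Fourier mass of $h$, and compare with $\Var(h) = \sum_{S \ne \emptyset} \widehat h(S)^2$ to force $\delta \geq c_3\,\Var(h)\,\log n / n$. Combining this with the two estimates above produces a block $k$ with $I_k(f) \geq c\,\Var(f)\,\log n/n$, and undoing the first reduction transfers the same bound back to the original $f$.

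The hard part will be the discretization step, and it is exactly what separates general product spaces from the two-point space. The two requirements on $h$ pull against each other: the grid must be coarse, so that the cube stays of dimension $n^{1+o(1)}$ and the $\log n$ in the KKL step is not degraded to $\log\log n$ or to a constant, yet it must also be fine enough that almost all of the variance of $f$ survives rounding and that the combinatorial block-influences of $h$ really do bound the continuous influences $I_k(f)$ from below. Neither inequality between $I_k(f)$ and the block-influence of $h$ holds automatically --- a continuous fibre can be non-constant only on a Lebesgue-null set, while a coarsened fibre can register variation that no genuine fibre of $f$ detects --- so a single fixed rounding cannot work. The resolution is to average the comparison over a uniformly random choice of dyadic level in each coordinate (equivalently, to run it at all scales $\leq 2^{-m}$ at once) and to treat separately the regime in which a constant fraction of the variance of $f$ lives below scale $2^{-m}$; controlling this multi-scale bookkeeping, rather than the hypercontractivity itself, is the true obstacle, and it reflects the fact that the bare ``resample'' dynamics on $[0,1]$ has vanishing log-Sobolev constant and so cannot be fed into a continuous hypercontractive inequality directly.
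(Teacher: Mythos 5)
The paper does not prove Theorem~\ref{thm:BKKKL}; it is quoted as a black-box result from \cite{BKKKL}, so there is no internal proof to compare yours against. I will therefore assess your sketch on its own terms.

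Your high-level plan --- push to $[0,1]^n$ via measure-preserving maps, dyadically discretize each coordinate into $m=\Theta(\log n)$ bits, and run a KKL-type hypercontractive argument on $\{0,1\}^{mn}$ at the level of $m$-bit blocks --- is indeed the family of ideas behind the original proof, and you are right that the first reduction is painless and that $\log(mn)=(1+o(1))\log n$ costs nothing. You also correctly flag the discretization step as delicate: a fixed coarsening need neither preserve a constant fraction of $\Var(f)$ nor have its block-influences dominated by $I_k(f)$, and ``average over a random dyadic level plus handle the fine-scale regime separately'' is the right instinct. But you stop exactly where the theorem actually lives: you never explain what to do when a constant fraction of $\Var(f)$ sits below scale $2^{-m}$. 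The natural fallback in that regime --- observe that this variance contributes to $\sum_k I_k(f)$ and divide by $n$ --- only yields $\max_k I_k(f)\gtrsim\Var(f)/n$, which is short of the claimed $\log n$ factor by exactly the amount you are trying to prove.

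There is also a second gap that your writeup does not acknowledge: ``run the KKL argument on $\{0,1\}^{mn}$ in grouped (block) form'' is not a black-box invocation of KKL. In the usual KKL dichotomy, the high-degree case ($W_{>\ell}\geq\Var/2$) is handled by noting that each element of a large $S$ contributes a separate coordinate influence, so the total influence is $\gtrsim\ell\cdot\Var$. With $n$ blocks of $m$ bits, a large set $S$ can lie entirely inside a single block, in which case it contributes to only one block-influence $J_k$; the bound $\sum_k J_k\geq\ell\cdot W_{>\ell}$ simply fails, and again the naive fallback only gives $\max_k J_k\gtrsim\Var/n$. This ``spectrum concentrated at fine scales inside one block'' regime is the same phenomenon as the fine-scale regime of the discretization, and it is the crux of the whole theorem. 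A correct proof must show that such concentration forces a \emph{single} block influence to be much larger than $\Var/n$ --- roughly, that a Boolean function whose Fourier mass sits deep inside one block is already heavily influenced by that block, with the right $\log n$ gain. As written, your sketch neither proves this nor reduces it to a cited statement, so I would count this as a genuine gap rather than a routine detail left to the reader. The hypercontractive bookkeeping you do describe (low-degree mass vs.\ $\|\nabla_{B_k}h\|_{4/3}$) is the easy half; the hard half is the high-degree/fine-scale half, and that is what separates Theorem~\ref{thm:BKKKL} from a corollary of KKL.
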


Theorem \ref{thm:friedgut-kalai} follows swiftly by Theorem \ref{thm:BKKKL} and the following lemma of Russo and Margulis that relates the change of $\mathbb{E}_{x\sim\pi_p^{\otimes n}}[f]$ to the influence of $f$.
\begin{lemma}[Russo \cite{Russo1978}, Margulis \cite{Margulis1974}]\label{lemma:russo-margulis}
    Let $f \colon \{0,1\}^n \to \{0,1\}$ be a monotone function. Then under the measure $\pi_p^{\otimes n}$, we have
    \begin{equation*}
        \frac{d\mathbb{E}_{x\sim\pi_p^{\otimes n}}[f(x)]}{dp} = \sum_{k=1}^{n}{I_k(f;\pi_p^{\otimes n})}\text{.}
    \end{equation*}
\end{lemma}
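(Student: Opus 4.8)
The plan is to exploit the multilinear structure of the map $p \mapsto \E_{x\sim\pi_p^{\otimes n}}[f(x)]$. I would introduce the auxiliary function $g\colon[0,1]^n\to[0,1]$ of $n$ independent parameters defined by $g(p_1,\dots,p_n) = \E[f(x)]$, where the coordinates $x_i$ are drawn independently with $\Prob[x_i=1]=p_i$. Expanding over all $x\in\{0,1\}^n$,
\begin{equation*}
    g(p_1,\dots,p_n) = \sum_{x\in\{0,1\}^n} f(x)\prod_{i\,:\,x_i=1}p_i\prod_{i\,:\,x_i=0}(1-p_i)\textrm{,}
\end{equation*}
which is a polynomial that is affine (of degree at most one) in each variable $p_k$ separately. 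Since $\E_{x\sim\pi_p^{\otimes n}}[f(x)] = g(p,\dots,p)$, the chain rule gives
\begin{equation*}
    \frac{d\E_{x\sim\pi_p^{\otimes n}}[f(x)]}{dp} = \sum_{k=1}^n\frac{\partial g}{\partial p_k}(p,\dots,p)\textrm{,}
\end{equation*}
and differentiability, together with the interchange of $d/dp$ with the finite sum over $x$, is automatic because $g$ is a polynomial.

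Next I would compute each partial derivative. Fixing $k$ and grouping the terms of the sum according to the remaining coordinates $x_{-k}=(x_i)_{i\neq k}$, affineness in $p_k$ yields
\begin{equation*}
    g(p_1,\dots,p_n) = \E\bigl[\,p_k\,f_k^x(1) + (1-p_k)\,f_k^x(0)\,\bigr]\textrm{,}
\end{equation*}
where the expectation is over $x$ with each $x_i\sim\pi_{p_i}$ (note that $f_k^x(1)$ and $f_k^x(0)$ do not depend on $x_k$). Differentiating in $p_k$ and then setting every $p_i=p$ gives $\frac{\partial g}{\partial p_k}(p,\dots,p) = \E_{x\sim\pi_p^{\otimes n}}\bigl[f_k^x(1)-f_k^x(0)\bigr]$. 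Now I invoke monotonicity: on any fibre, $f_k^x$ is a map $\{0,1\}\to\{0,1\}$ with $f_k^x(1)\geq f_k^x(0)$, so $f_k^x(1)-f_k^x(0)$ equals $1$ when $f_k^x$ is non-constant and $0$ otherwise. Hence $\frac{\partial g}{\partial p_k}(p,\dots,p) = \Prob_{x\sim\pi_p^{\otimes n}}[f_k^x\ \textrm{is not constant}] = I_k(f;\pi_p^{\otimes n})$, and summing over $k$ completes the proof.

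The argument is elementary, so there is no substantial obstacle; the one step that deserves attention is the use of monotonicity when passing from the signed quantity $f_k^x(1)-f_k^x(0)$ to the unsigned influence $I_k(f;\pi_p^{\otimes n})$. Without monotonicity one obtains only $\frac{d}{dp}\E[f] = \sum_k\bigl(\Prob[f_k^x\equiv\mathrm{id}]-\Prob[f_k^x\equiv\mathrm{neg}]\bigr)$, which is why the identity is understood to concern monotone $f$, the only case in which it is applied here.
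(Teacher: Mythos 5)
Your proof is correct; since the paper states this lemma as a cited fact (Russo, Margulis) without proof, the closest internal comparison is to the paper's proof of its generalization in Lemmas~\ref{lemma:russo-margulis-one-variable} and~\ref{lemma:russo-margulis-generalized}, which uses the same product-rule/multilinearity decomposition over coordinates followed by a per-coordinate derivative computation. You also rightly flag that the identity in the stated form requires $f$ to be monotone (otherwise one only gets the signed sum $\sum_k(\Prob[f_k^x\equiv\mathrm{id}]-\Prob[f_k^x\equiv\mathrm{neg}])$), a hypothesis the paper omits in the lemma statement but always satisfies in its applications.
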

Note that if $f\colon\{0,1\}^n \to \{0,1\}$ is symmetric, then all the coordinates have equal influence. Since $f$ is also monotone, by Theorem \ref{thm:BKKKL} and Lemma \ref{lemma:russo-margulis} we obtain
\begin{equation*}
    \frac{d\mathbb{E}_{x\sim\pi_p^{\otimes n}}[f(x)]}{dp} = \sum_{k=1}^{n}{I_k(f;\pi_p^{\otimes n})} \geq c\Var_{\pi_p^{\otimes n}}(f)\log n\text{.}
\end{equation*}
This implies that $\mathbb{E}_{x\sim\pi_p^{\otimes n}}[f(x)]$ varies quickly as a function of $p$. In fact, $\mathbb{E}_{x\sim\pi_{p}^{\otimes n}}[f(x)]$ increases from $\varepsilon$ to $1-\varepsilon$ as $p$ increases on an interval of length $1/\log n$, up to some constant depending on $\varepsilon$.

\subsection{Kalai and Mossel's Result for \texorpdfstring{$|A| > 2$}{q > 2}}

Kalai and Mossel generalized Friedgut and Kalai's results to sets with more than $2$ elements.

\begin{theorem}[Kalai and Mossel \cite{KalaiMossel2015}]\label{thm:kalai-mossel}
    There exists a constant $C = C(|A|)$ such that if $f \colon A^n \to A$ is symmetric and monotone, then
    \begin{equation*}
        \gamma(\{\mu \in \Delta[A]\;\vert\;\varepsilon < \mathbb{P}_{x \sim \mu^{\otimes n}}[f(x)=a] < 1-\varepsilon\}) \leq C(\log(1-\varepsilon)-\log(\varepsilon))\frac{\log\log n}{\log n}
    \end{equation*}
    for any $a \in A$ and any $0 < \varepsilon < 1/2$.
\end{theorem}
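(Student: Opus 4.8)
The plan is to cover $\Delta[A]$ by the family of lines parallel to one well-chosen constant direction $V$ and to show that $\Phi(\mu) := \Prob_{x \sim \mu^{\otimes n}}[f(x) = a]$ increases quickly along each such line, with $t$-derivative at least a constant times $\Phi(1-\Phi)\log n$; integrating this over the $(|A|-2)$-dimensional family of lines then yields \eqref{eqn:main}. To set up, let $g \colon A^n \to \{0,1\}$ be the function with $g(x) = 1$ iff $f(x) = a$, extend $\Phi$ to the multilinear polynomial $\sum_{x : f(x) = a} \prod_i \nu(x_i)$ in the variables $\nu(b)$, $b \in A$, and differentiate. Writing $f(b,y)$ for the value of $f$ with coordinate $1$ equal to $b$ and coordinates $2, \dots, n$ equal to $y$, and using the symmetry of $f$ to factor out an $n$, one finds that $\partial \Phi / \partial \nu(b)$ at $\nu = \mu$ equals $n\,\Prob_{y \sim \mu^{\otimes(n-1)}}[f(b,y) = a] =: \Psi_b(\mu)$. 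Monotonicity of $f$ with respect to $\leq_a$ says precisely that putting $a$ in coordinate $1$ can only help the event $f = a$; hence $\Psi_a \geq \Psi_b$ for every $b$, and $\Psi_a - \Psi_b = n\,\Prob_y[f(a,y) = a,\ f(b,y) \neq a]$.

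Put $q = |A|$ and $V := (q-1)\delta_a - \sum_{b \neq a} \delta_b$. Since $\sum_b V(b) = 0$, $V$ is tangent to $\Delta[A]$, and the directional derivative of $\Phi$ along $V$ is $D_V \Phi = \sum_{b \neq a}(\Psi_a - \Psi_b) \geq 0$. On the other hand, $g$ inherits the transitive symmetry of $f$, so all of its influences $I_k(g; \mu^{\otimes n})$ on the product space $(A^n, \mu^{\otimes n})$ are equal; since $\Var_{\mu^{\otimes n}}(g) = \Phi(\mu)(1-\Phi(\mu))$, Theorem~\ref{thm:BKKKL} gives $\sum_k I_k(g; \mu^{\otimes n}) \geq c\,\Phi(\mu)(1-\Phi(\mu))\log n$. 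But by monotonicity the fibre $g_1^x$ is non-constant exactly when $g_1^x(a) = 1$ and $g_1^x(b) = 0$ for some $b \neq a$, so a union bound over $b$ gives $I_1(g; \mu^{\otimes n}) \leq \frac{1}{n}\sum_{b \neq a}(\Psi_a - \Psi_b)$, whence $\sum_k I_k(g; \mu^{\otimes n}) \leq \sum_{b \neq a}(\Psi_a - \Psi_b) = D_V \Phi(\mu)$. Combining the two estimates, $D_V \Phi(\mu) \geq c\,\Phi(\mu)(1 - \Phi(\mu))\log n$ for every $\mu \in \Delta[A]$.

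On any line, write $\mu_t := \mu_0 + tV$; over the interval of $t$ for which $\mu_t \in \Delta[A]$, $\Phi(\mu_t)$ is a non-decreasing polynomial in $t$ (its $t$-derivative $D_V\Phi(\mu_t)$ is nonnegative), so $\{ t : \varepsilon \leq \Phi(\mu_t) \leq 1 - \varepsilon \}$ is an interval; separating variables in $\frac{d}{dt}\Phi(\mu_t) \geq c\,\Phi(\mu_t)(1-\Phi(\mu_t))\log n$ bounds its length by $\frac{2(\log(1-\varepsilon) - \log\varepsilon)}{c\log n}$. In the coordinates $(\mu(b))_{b \neq a}$ on $\Delta[A]$ the direction $V$ becomes $(-1, \dots, -1)$; changing to the coordinates $(\mu(b) - \mu(b_0))_{b \neq a,\, b \neq b_0}$, which are constant along the $V$-lines and lie in $[-1,1]$, together with the parameter $\mu(b_0)$, has unit Jacobian, so Fubini gives $\lambda(\{ \mu \in \Delta[A] : \varepsilon \leq \Phi(\mu) \leq 1 - \varepsilon \}) \leq 2^{q-2} \cdot \frac{2(\log(1-\varepsilon) - \log\varepsilon)}{c\log n}$, up to the fixed constant relating $\lambda$ to coordinate volume. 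This is \eqref{eqn:main} with $C(|A|) = 2^{|A|-1}/c$.

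The crux is the second paragraph: one needs a \emph{single} direction in which the Russo--Margulis-type derivative of $\Phi$ dominates the entire BKKKL influence $\sum_k I_k(g; \mu^{\otimes n})$ \emph{with an absolute constant}. Moving toward $\delta_a$ while keeping the relative weights of the non-$a$ letters fixed, which is the naive choice, only achieves this up to a factor depending on $\mu$ that can be arbitrarily close to $0$ (roughly, $\min_{b \neq a}\mu(b)$); integrating that weaker bound over $\Delta[A]$ diverges, and this loss is precisely what costs Kalai and Mossel an extra factor of $\log\log n$. Taking instead $V = (q-1)\delta_a - \sum_{b \neq a}\delta_b$ removes the loss, because the union bound over the $q-1$ letters $b \neq a$ now pairs exactly with the $q-1$ summands $\Psi_a - \Psi_b$ of $D_V\Phi$. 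The rest is routine: one checks that Theorem~\ref{thm:BKKKL} applies verbatim to the (possibly atomic) product space $(A, \mu)^{\otimes n}$, and that transitivity of the symmetry group is enough to make all influences equal.
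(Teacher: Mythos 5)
Your argument proves the stronger $O(1/\log n)$ bound (the paper's Theorem~\ref{thm:main}, which implies the cited Kalai--Mossel statement), and it is correct, but it takes a genuinely different and cleaner route than the paper. The paper fixes $a=0$ and moves radially from the face $\{\mu(0)=0\}$ toward $\delta_0$; along such a ray the Russo--Margulis derivative comes out as $\frac{1}{1-t}\sum_k \E_x\bigl[\mathbb{I}[f_k^x\ \textrm{not constant}]\,\E_{x_k}[1-f_k^x(x_k)]\bigr]$, a \emph{weighted} version of $\sum_k I_k$ whose weights can shrink to the size of the second-smallest atom $\alpha$ of $\mu$. To bridge this to $\Var(f)\log n$ the paper invokes Keller's $h$-BKKKL (Theorem~\ref{thm:keller-bkkkl}) with $h(t)=2\,\mathbb{I}(t\in(0,1))(1-t)(1-\log(1-t))$, accepts a $\log(1/\alpha)$ loss, and then integrates that loss away with a two-dimensional cross-section over each region $R_i$. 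You instead fix the single direction $V=(q-1)\delta_a-\sum_{b\neq a}\delta_b$, compute $D_V\Phi=\sum_{b\neq a}(\Psi_a-\Psi_b)$, and notice that the union bound over $b\neq a$ makes this precisely an upper estimate for $\sum_k I_k(g;\mu^{\otimes n})$ with no weight loss; the plain BKKKL theorem (Theorem~\ref{thm:BKKKL}) then closes the gap, and a one-dimensional Fubini over the parallel family of $V$-lines finishes. The pairing of the $q-1$ summands of $D_V\Phi$ with the $q-1$ events in the union bound is exactly the right observation, and it eliminates the need for $h$-influences and the $\alpha$-dependent bookkeeping altogether. The one point that deserves the explicit care you give it is the applicability of Theorem~\ref{thm:BKKKL} to the atomic product space $(A,\mu)^{\otimes n}$; this is fine via the quantile embedding $F=f\circ G^{\otimes n}\colon[0,1]^n\to\{0,1\}$, since a non-constant embedded fibre $F_k^x$ corresponds to $f_k^x$ being non-constant on $\mathrm{supp}(\mu)$, which only makes $I_k$ smaller, so the lower bound transfers to $I_k(f;\mu^{\otimes n})$ as needed.
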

Kalai and Mossel \cite{KalaiMossel2015} achieved this using another widely known definition of influence.

\begin{definition}\label{def:variation-influence}
    For any function $f \colon X \to \{0,1\}$ and any $1 \leq k \leq n$, the \emph{influence} of the $k$th variable on $f$ is
    \begin{equation*}
        \widetilde{I}_{k}(f) = \mathbb{E}_{x\in X}[\Var(f_k^x)]\text{.}
    \end{equation*}
    In particular, if $f \colon A^n \to \{0,1\}$ is a function with product measure $\mu^{\otimes n}$ for some $\mu \in \Delta[A]$, then denote
    \begin{equation*}
        \widetilde{I}_k(f;\mu^{\otimes n}) = \mathbb{E}_{x \sim \mu^{\otimes n}}[\Var(f_k^x)]\text{.}
    \end{equation*}
\end{definition}
Using a hypercontractivity result of Wolff \cite{Wolff2007}, Kalai and Mossel generalized the result of Talagrand \cite{Talagrand1994} on the lower bound of the influence of a function.

\begin{theorem}[Kalai and Mossel \cite{KalaiMossel2015}]
    There exists an universal constant $c$ such that for any probability space $(X,\mu)$ and any symmetric function $f \colon X^n \to \{0,1\}$, we have
    \begin{equation*}
        \sum_{k=1}^{n}{\widetilde{I}_k(f;\mu^{\otimes n})} \geq \frac{c}{\log\left(1/\min_{x \in X}{\mu(x)}\right)}\Var_{\mu^{\otimes n}}(f)\log n\text{.}
    \end{equation*}
\end{theorem}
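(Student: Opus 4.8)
The plan is to run Talagrand's proof of the Boolean inequality $\Var(f)\le C\sum_k I_k(f)/\log(1/I_k(f))$ over $(X^n,\mu^{\otimes n})$, using Wolff's hypercontractive estimate in place of the Bonami--Beckner inequality; the weaker hypercontractivity of a general finite space is what produces the loss of a factor $\log(1/\alpha)$, $\alpha:=\min_i\mu(i)$. Concretely I would first establish the Talagrand-type bound
\[
  \Var_\mu(f)\ \le\ C\log(1/\alpha)\sum_{k=1}^{n}\frac{\widetilde I_k}{\log(1/\widetilde I_k)},\qquad \widetilde I_k:=\widetilde I_k(f;\mu^{\otimes n}),
\]
for every $f\colon X^n\to\{0,1\}$ with a universal constant $C$ (omitting terms with $\widetilde I_k=0$), and then specialise using symmetry. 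Fix an orthonormal basis $\phi_0\equiv 1,\phi_1,\dots$ of $L^2(X,\mu)$, set $\phi_\sigma=\prod_k\phi_{\sigma_k}$ on $X^n$, and write $f=\sum_\sigma\widehat{f}(\sigma)\phi_\sigma$ with $|\sigma|=\#\{k:\sigma_k\ne 0\}$. Then $\Var_\mu(f)=\sum_{\sigma\ne 0}\widehat{f}(\sigma)^2$, while the projection $D_kf:=f-\E_{x_k}f=\sum_{\sigma_k\ne 0}\widehat{f}(\sigma)\phi_\sigma$ satisfies $\|D_kf\|_2^2=\widetilde I_k\le\tfrac14$, and — since $f$ is $\{0,1\}$-valued, so that on each fibre $D_kf$ is a centred Bernoulli variable — $\|D_kf\|_1=2\widetilde I_k$. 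This last identity replaces the Boolean-case fact that the discrete derivative is $\{-1,0,1\}$-valued; it is the single place where Boolean-valuedness of $f$ is used.

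Next I would record a noise-operator identity. Letting $T_r$ act by $T_r\phi_\sigma=r^{|\sigma|}\phi_\sigma$, an expansion in $\{\phi_\sigma\}$ followed by integration gives
\[
  \Var_\mu(f)=\int_0^1\frac{2}{r}\sum_{k=1}^{n}\|T_rD_kf\|_2^2\,dr,\qquad
  \int_0^{1/2}\frac{2}{r}\sum_{k=1}^{n}\|T_rD_kf\|_2^2\,dr=\bigl\|T_{1/2}(f-\E f)\bigr\|_2^2\le\tfrac14\Var_\mu(f),
\]
so $\Var_\mu(f)\le\tfrac43\int_{1/2}^1\frac{2}{r}\sum_k\|T_rD_kf\|_2^2\,dr$ (cutting at $r=1/2$ only removes the non-integrable singularity of $1/r$ at the origin). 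Interpolating between $L^1$ and $L^2$ yields $\|D_kf\|_p\le 2\,\widetilde I_k^{1/p}$ for all $p\in[1,2]$, and Wolff's hypercontractive estimate, tensorised over the $n$ coordinates, furnishes a constant $\rho=\rho(\alpha)\in(0,1]$ with $\rho\ge c_0/\log(1/\alpha)$ such that $\|T_rg\|_2\le\|g\|_{p(r)}$ for $p(r)=1+r^{2\rho}$. Combining, $\|T_rD_kf\|_2^2\le 4\,\widetilde I_k^{\,2/p(r)}$.

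Now I would integrate out the parameter. Substituting $r=e^{-s}$ and using $\frac{1-r^{2\rho}}{1+r^{2\rho}}=\tanh(\rho s)\ge \rho s/2$ on $[0,\log 2]$ together with $\widetilde I_k\le\tfrac14$, each term obeys
\[
  \int_{1/2}^{1}\widetilde I_k^{\,2/p(r)}\,\frac{dr}{r}
  =\widetilde I_k\int_0^{\log 2}e^{-\log(1/\widetilde I_k)\,\tanh(\rho s)}\,ds
  \ \le\ \frac{2\,\widetilde I_k}{\rho\,\log(1/\widetilde I_k)}
  \ \le\ \frac{2\log(1/\alpha)}{c_0}\cdot\frac{\widetilde I_k}{\log(1/\widetilde I_k)},
\]
and summing over $k$ gives the Talagrand-type bound. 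To finish, transitivity of the symmetry group of $f$ forces all the $\widetilde I_k$ to equal a common value $\widetilde I$; the case $\widetilde I=0$ (i.e.\ $f$ constant) is trivial, and otherwise $\sum_k\widetilde I_k=n\widetilde I\ge\frac{1}{C\log(1/\alpha)}\Var_\mu(f)\log(1/\widetilde I)$. If $\widetilde I\le n^{-1/2}$ then $\log(1/\widetilde I)\ge\tfrac12\log n$ and we are done; if $\widetilde I>n^{-1/2}$ then $n\widetilde I>n^{1/2}$, which already dominates $\frac{c}{\log(1/\alpha)}\Var_\mu(f)\log n$ for every $n\ge 2$ once $c$ is small enough (using $\Var_\mu(f)\le\tfrac14$ and $\log(1/\alpha)\ge\log 2$), while $n=1$ is vacuous.

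The one genuinely external ingredient, and the source of the $\log(1/\alpha)$ factor, is the hypercontractive estimate of the second paragraph: one needs the hypercontractive exponent of an arbitrary finite probability space to deteriorate only like $1/\log(1/\alpha)$, which is exactly what Wolff's theorem provides (for a space with more than two atoms one first reduces to the two-atom case, equivalently quotes the matching lower bound on its log-Sobolev constant). Apart from that, the argument is the Boolean Talagrand proof transcribed over $\mu^{\otimes n}$; the one step demanding care is to recognise $D_kf=f-\E_{x_k}f$ as the right analogue of the discrete derivative and to verify $\|D_kf\|_1=2\widetilde I_k$, which is what makes the $L^1$--$L^2$ interpolation, and hence the whole estimate, work.
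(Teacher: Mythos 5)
The paper states this theorem as a citation to Kalai and Mossel and does not prove it; it only indicates that the argument ``generalize[s] the result of Talagrand'' using ``a hypercontractivity result of Wolff.'' Your proposal is exactly a worked-out version of that route, and it checks out: the identities $\|D_kf\|_2^2=\widetilde I_k$ and $\|D_kf\|_1=2\widetilde I_k$ for $\{0,1\}$-valued $f$, the spectral integral $\Var_\mu(f)=\int_0^1\frac{2}{r}\sum_k\|T_rD_kf\|_2^2\,dr$, the cut at $r=1/2$, the $L^1$--$L^2$ interpolation, the substitution $r=e^{-s}$ with $\tanh(\rho s)\ge\rho s/2$, and the final symmetry/case-split $\widetilde I\lessgtr n^{-1/2}$ are all correct, and the only external input is exactly the Wolff hypercontractive estimate $\|T_rg\|_2\le\|g\|_{1+r^{2\rho}}$ with $\rho\gtrsim 1/\log(1/\alpha)$ that the paper points to. So this is essentially the same approach as the (cited) proof, carried out in full.

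One small caution: the aside that ``for a space with more than two atoms one first reduces to the two-atom case'' is not how the log-Sobolev bound for a general finite space is usually obtained (the reduction is not a literal restriction to two atoms); what you actually need, and what Wolff (following Diaconis--Saloff-Coste) supplies directly, is a lower bound $\rho\ge c_0/\log(1/\alpha)$ on the log-Sobolev constant of the complete-graph chain with stationary measure $\mu$, which then tensorizes. Since you do quote that bound, the argument stands, but the phrase ``reduces to the two-atom case'' should not be taken literally.
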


Kalai and Mossel \cite{KalaiMossel2015} also proved a generalization of the Russo-Margulis lemma under a certain class of monotone functions (see Definition \ref{def:a-monotone}). Let $[q] \coloneqq \{0,1,\ldots,q-1\}$.
\begin{lemma}[Kalai and Mossel \cite{KalaiMossel2015}]
    Let $f \colon [q]^n \to \{0,1\}$ be a $0$-monotone function, and let $\mu \in \Delta[q]$. Write $\mu = (1-\mu(0))\mu' + \mu(0)\delta_0$ for some $\mu' \in \Delta[q]$ such that $\mu'(0)=0$, and let $\mu_t = \mu + t\delta_0 - t\mu'$ for $t \in [0,1-\mu(0)]$. Then
    \begin{equation*}
        \frac{d\mathbb{E}_{x\sim\mu_t^{\otimes n}}[f(x)]}{dt} \geq \sum_{k=1}^{n}{\widetilde{I}_k(f;\mu^{\otimes n})}\text{.}
    \end{equation*}
\end{lemma}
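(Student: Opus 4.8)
The plan is to differentiate $\E_{x\sim\mu_t^{\otimes n}}[f(x)]$ in $t$ by hand. Writing $\mu_t = (1-\mu(0)-t)\mu' + (\mu(0)+t)\delta_0$, we have $\mu_t(0) = \mu(0)+t$ and $\mu_t(j) = (1-\mu(0)-t)\mu'(j)$ for $j\neq 0$, so $\E_{x\sim\mu_t^{\otimes n}}[f(x)] = \sum_{x} f(x)\prod_{i=1}^{n}\mu_t(x_i)$ is a polynomial in $t$ and hence differentiable on $[0,1-\mu(0)]$. Applying the product rule, grouping the resulting terms by the coordinate $k$ that gets differentiated, and using $\frac{d}{dt}\mu_t(0)=1$, $\frac{d}{dt}\mu_t(j)=-\mu'(j)$ for $j\neq 0$, and $\mu'(0)=0$, I would arrive at
\begin{equation*}
    \frac{d}{dt}\E_{x\sim\mu_t^{\otimes n}}[f(x)] = \sum_{k=1}^{n}\E_{x\sim\mu_t^{\otimes n}}\!\left[f_k^x(0) - \E_{Y\sim\mu'}\big[f_k^x(Y)\big]\right],
\end{equation*}
which makes sense because $f_k^x$ does not depend on $x_k$. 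Informally this is the Russo--Margulis picture: nudging $t$ resamples each coordinate, replacing a $\mu'$-distributed value by $0$, and $0$-monotonicity guarantees that each such replacement only increases $f$ --- already showing the derivative is nonnegative.

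The crux is the pointwise inequality
\begin{equation*}
    f_k^x(0) - \E_{Y\sim\mu'}\big[f_k^x(Y)\big] \;\geq\; \Var_{\mu_t}(f_k^x),
\end{equation*}
valid for every $k$ and every $x$, which I would establish by a short case analysis on $f_k^x(0)\in\{0,1\}$. Since $(j)\leq_0(0)$ for every $j$, $0$-monotonicity of $f$ forces $f_k^x(j)\leq f_k^x(0)$ for all $j$; thus if $f_k^x(0)=0$ then $f_k^x\equiv 0$ and both sides vanish. If $f_k^x(0)=1$, the left side equals $\Prob_{Y\sim\mu'}[f_k^x(Y)=0]$, whereas the right side is $\Prob_{\mu_t}[f_k^x=0]\,\Prob_{\mu_t}[f_k^x=1]\leq \Prob_{\mu_t}[f_k^x=0]=(1-\mu_t(0))\,\Prob_{\mu'}[f_k^x=0]\leq\Prob_{\mu'}[f_k^x=0]$, where I used $\mu_t=(1-\mu_t(0))\mu'+\mu_t(0)\delta_0$ together with the fact that the atom at $0$ sends $f_k^x$ to $1$.

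Plugging this into the derivative formula and recalling $\widetilde{I}_k(f;\mu_t^{\otimes n})=\E_{x\sim\mu_t^{\otimes n}}[\Var_{\mu_t}(f_k^x)]$ gives
\begin{equation*}
    \frac{d}{dt}\E_{x\sim\mu_t^{\otimes n}}[f(x)] \;\geq\; \sum_{k=1}^{n}\widetilde{I}_k(f;\mu_t^{\otimes n}),
\end{equation*}
which is the claimed bound with the influences read at the current measure $\mu_t$ (specializing to $t=0$ recovers the statement verbatim). I expect the only real subtlety to be the second case of the pointwise inequality: discarding the factor $\Prob_{\mu_t}[f_k^x=1]$ from the variance and then comparing $\Prob_{\mu_t}[f_k^x=0]$ with $\Prob_{\mu'}[f_k^x=0]$ through the decomposition of $\mu_t$ is precisely the step that turns the bare directional derivative into a sum of variation influences. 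The differentiation step is routine, everything in sight being polynomial in $t$.
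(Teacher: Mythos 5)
Your proof is correct, and it follows essentially the same route the paper itself takes for its own Lemmas~\ref{lemma:russo-margulis-one-variable} and~\ref{lemma:russo-margulis-generalized}: differentiate $\E_{x\sim\mu_t^{\otimes n}}[f(x)]$ by the product rule, group terms by the differentiated coordinate, and exploit $0$-monotonicity of the one-dimensional restriction $f_k^x$. Your derivative identity is right, and so is the pointwise case analysis establishing
\begin{equation*}
    f_k^x(0) - \E_{Y\sim\mu'}\bigl[f_k^x(Y)\bigr] \;\geq\; \Var_{\mu_t}(f_k^x)\textrm{.}
\end{equation*}
In the first case ($f_k^x(0)=0$), $0$-monotonicity does force $f_k^x\equiv 0$ since $j\leq_0 0$ for every $j\in[q]$; in the second case ($f_k^x(0)=1$), the chain $\Var_{\mu_t}(f_k^x)\leq\Prob_{\mu_t}[f_k^x=0]=(1-\mu_t(0))\Prob_{\mu'}[f_k^x=0]\leq\Prob_{\mu'}[f_k^x=0]$ is exactly right, because the $\delta_0$ component of $\mu_t$ never sends $f_k^x$ to $0$. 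Two small remarks. First, as you observe, the statement as printed mixes $\mu_t$ on the left with $\mu$ on the right; your argument cleanly gives $\frac{d}{dt}\E_{\mu_t^{\otimes n}}[f]\geq\sum_k\widetilde{I}_k(f;\mu_t^{\otimes n})$ for all $t$, which specializes to the stated inequality at $t=0$. Second, this particular lemma is only cited in the paper (from Kalai and Mossel), not reproved; what the paper actually proves, in Lemmas~\ref{lemma:russo-margulis-one-variable}--\ref{lemma:russo-margulis-generalized}, is the \emph{exact} identity
\begin{equation*}
    \frac{d\E_{x\sim\mu_t^{\otimes n}}[f(x)]}{dt} = \frac{1}{1-t}\sum_{k=1}^{n}\E_{x\sim\mu_t^{\otimes n}}\Bigl[\mathbb{I}[f_k^x\textrm{ is not constant}]\,\E_{x_k\sim\mu_t}[1-f_k^x(x_k)]\Bigr]\textrm{,}
\end{equation*}
which is sharper than the variance lower bound; the paper then feeds this into Keller's $h$-influence framework rather than into the variation-influence bound, which is where the two arguments diverge downstream. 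Your final step (discarding $\Prob_{\mu_t}[f_k^x=1]$ from the variance) is precisely the place where you trade the exact identity for the weaker but more convenient form required by the stated lemma.
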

Their proof proceeds in the same manner as that of Friedgut and Kalai. In particular, if $f \colon [q]^n \to \{0,1\}$ is both symmetric and $0$-monotone under $\mu^{\otimes n}$, then
\begin{equation*}
    \frac{d\mathbb{E}_{x\sim\mu_t^{\otimes n}}[f(x)]}{dt} \geq \frac{c}{\log\left(1/\min_{x\in X}{\mu(x)}\right)}\Var_{\mu^{\otimes n}}(f)\log n\text{.}
\end{equation*}
Thus, $\mathbb{E}_{x \sim \mu_t^{\otimes n}}[f(x)]$ varies quickly as a function of $t$ with a dependence on $\min_{i \in \{1,\ldots,n\}}{\mu(i)}$. This dependence gives the $O(\log \log n/\log n)$ bound.

\subsection{General Influences}

Following Keller \cite{Keller2011}, we use a more general definition of influences in general product spaces.
\begin{definition}\label{def:h-influences}
    Let $h \colon [0,1] \to \mathbb{R}$. For any function $f \colon X \to \{0,1\}$ and any $1 \leq k \leq n$, the \emph{$h$-influence} of the $k$th variable on $f$ is
    \begin{equation*}
        I_k^h(f) = \mathbb{E}_{x \in X}[h(\mathbb{E}[f_k^x])]\text{.}
    \end{equation*}
\end{definition}
Note that Definition \ref{def:BKKKL-influence} is obtained by the function $h(t) = \mathbb{I}[t \in (0,1)]$ and Definition \ref{def:variation-influence} is obtained by the function $h(t) = t(1-t)$. 

Keller \cite{Keller2011} proved a generalization of the BKKKL theorem for $h$-influences.
\begin{theorem}[Keller \cite{Keller2011}]\label{thm:keller-bkkkl}
    Denote by $\Ent(t) = -t\log t - (1-t)\log(1-t)$ the entropy function. Let $h \colon [0,1] \to \mathbb{R}$ be such that $h(t) \geq \Ent(t)$ for all $t \in [0,1]$. Then there exists a universal constant $c$ such that for any function $f \colon [0,1]^n \to \{0,1\}$, there exists a coordinate $k$ such that
    \begin{equation*}
        I_k^h(f) \geq c \Var(f)\log n/n\text{.}
    \end{equation*}
\end{theorem}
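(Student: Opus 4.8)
The plan is to reduce the statement to the classical BKKKL theorem (Theorem~\ref{thm:BKKKL}) by comparing the $h$-influence $I_k^h(f)$ with the geometric influence $I_k(f)$, using the hypothesis $h(t)\geq\Ent(t)$. The point is that $I_k(f)=\Prob_x[f_k^x\text{ not constant}]$ is a ``coarse'' quantity — it only records whether the fibre is nonconstant — whereas $I_k^h(f)=\E_x[h(\E[f_k^x])]$ weights each nonconstant fibre by $h(p)$, where $p=\E[f_k^x]\in(0,1)$ depends on the fibre. When $p$ is bounded away from $0$ and $1$, we have $h(p)\geq\Ent(p)\geq c_0>0$, so such fibres contribute comparably to both quantities; the difficulty is fibres where $p$ is very close to $0$ or $1$, on which $\Ent(p)$, and hence possibly $h(p)$, can be arbitrarily small. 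So a direct termwise comparison $I_k^h\gtrsim I_k$ fails, and one must argue more carefully.

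The key idea is that we do not need to control $I_k^h$ for the coordinate produced by Theorem~\ref{thm:BKKKL}; instead we run the BKKKL argument while tracking the finer information. Concretely, I would fix a threshold $\delta>0$ and, for each coordinate $k$, split the fibres according to whether $p=\E[f_k^x]$ lies in $[\delta,1-\delta]$ or in $(0,\delta)\cup(1-\delta,1)$. For the ``balanced'' fibres we have $h(p)\geq\Ent(\delta)$, so the contribution of balanced fibres to $\sum_k I_k^h(f)$ is at least $\Ent(\delta)$ times $\sum_k\Prob_x[\E[f_k^x]\in[\delta,1-\delta]]$. For the ``unbalanced'' fibres, the restriction $f_k^x$ is a Boolean function on $[0,1]$ with mean close to $0$ or $1$, and here one invokes an edge-isoperimetric / entropy estimate: a nonconstant fibre with mean $p$ still satisfies $h(p)\geq\Ent(p)$, and one compensates for the smallness of $\Ent(p)$ by noting such a fibre is ``rare'' in a way that is already exploited in the standard proof of Theorem~\ref{thm:BKKKL} via the logarithmic Sobolev / hypercontractive inequality on the discrete cube. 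In fact, the cleanest route is to recall that the proof of Theorem~\ref{thm:BKKKL} itself produces, for \emph{some} coordinate $k$, not merely $I_k(f)\gtrsim\Var(f)\log n/n$ but a lower bound on the quantity $\E_x[\E[f_k^x](1-\E[f_k^x])]$ \emph{together with} a bound showing the nonconstant fibres are not too lopsided — equivalently one obtains $\E_x[\Ent(\E[f_k^x])]\gtrsim\Var(f)\log n/n$ directly. Granting that, monotonicity $h\geq\Ent$ immediately gives $I_k^h(f)=\E_x[h(\E[f_k^x])]\geq\E_x[\Ent(\E[f_k^x])]\geq c\Var(f)\log n/n$ for that same $k$, which is the claim.

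Thus the concrete steps are: (i) state the pointwise inequality $h(t)\geq\Ent(t)$ and reduce the theorem to proving $\max_k\E_x[\Ent(\E[f_k^x])]\geq c\Var(f)\log n/n$; (ii) recall the log-Sobolev inequality on the two-point space and its tensorization to $[0,1]^n$ (equivalently, the Bonami--Beckner hypercontractivity underlying BKKKL), which bounds $\Ent(f)$ — or $\Var(f)$ after a standard truncation to an indicator — by a sum of Dirichlet-type energies of the fibres, each of which is comparable to $\E_x[\Ent(\E[f_k^x])]$; (iii) symmetrize over permutations of coordinates if needed, or simply take the best coordinate, to pass from the sum to $n$ times the maximum; (iv) collect constants. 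The main obstacle is step (ii): making the passage from the hypercontractive estimate to an honest lower bound in terms of $\Ent(\E[f_k^x])$ rather than $\Var(\E[f_k^x])$, since the two differ precisely on the lopsided fibres that are the whole source of difficulty — this is exactly the refinement over BKKKL that Keller's theorem encodes, and handling it requires comparing $\Ent(p)$ with $p\log(1/p)$ near $p=0$ and feeding that into the entropy-form log-Sobolev inequality rather than its variance form.
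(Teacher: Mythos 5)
The paper does not prove Theorem~\ref{thm:keller-bkkkl}; it is quoted from Keller \cite{Keller2011} as a black-box input to Proposition~\ref{prop:lower-bound-on-derivative}, so there is no internal argument to compare against. Judged on its own terms, your reduction from general $h$ to $h=\Ent$ is correct, and you rightly observe that a pointwise comparison to the BKKKL influence $I_k(f)=\Prob[f_k^x\;\textrm{is not constant}]$ goes the wrong way, since $\Ent(p)\leq(\log 2)\,\mathbb{I}[p\in(0,1)]$. However, the sketch has a genuine gap at precisely the sentence you mark with ``Granting that.'' The assertion that ``the proof of Theorem~\ref{thm:BKKKL} itself produces $\dots$ $\E_x[\Ent(\E[f_k^x])]\geq c\Var(f)\log n/n$ directly'' is not something one can simply recall: for $h=\Ent$ it \emph{is} the theorem being proved, so invoking it is circular. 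You have reverse-engineered the statement, not supplied a proof.

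There is also a conceptual wobble in how you frame the obstacle. You say the difficulty is obtaining a bound ``in terms of $\Ent(\E[f_k^x])$ rather than $\Var(\E[f_k^x])$,'' but $\Ent(p)\geq p(1-p)$ pointwise, so $I_k^{\Ent}(f)\geq\widetilde{I}_k(f)$ always, and a lower bound on the variance-influence $\widetilde{I}_k$ would already suffice. The actual difficulty is that the BKKKL \emph{statement} lower-bounds only $I_k(f)$, which dominates both $\widetilde{I}_k(f)$ and $I_k^{\Ent}(f)$ and therefore implies neither. What one must do --- and what Keller carries out --- is open up the BKKKL \emph{proof}: discretize each copy of $[0,1]$ to $\{0,1\}^m$, run the hypercontractive (KKL-type) argument on $\{0,1\}^{nm}$ to lower-bound the total influence of some block of $m$ coordinates, and then invoke an edge-isoperimetric estimate \emph{within the block}: a Boolean function $g$ on $\{0,1\}^m$ with $\E[g]=p$ has total influence at least of order $p\log(1/p)+(1-p)\log(1/(1-p))=\Ent(p)$. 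Averaging over the frozen coordinates converts the block-influence bound into a lower bound on $\E_x[\Ent(\E[f_k^x])]$, uniformly in $m$. This block-wise isoperimetric ingredient is the mechanism your sketch is missing, and it cannot be absorbed into step (iv), ``collect constants.''
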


\section{Proof of Theorem \ref{thm:main}}

\subsection{Reduction to one value}
Assume $A = [q] \coloneqq \{0,1,\ldots, q-1\}$ without loss of generality. Let $f \colon [q]^n \to [q]$ be a symmetric and monotone function. Fix some $a \in [q]$ and some $\varepsilon \in (0,1/2)$. Define $\widetilde{f} \colon [q]^n \to \{0,1\}$ by
\begin{equation*}
    \widetilde{f}(x) = \mathbb{I}[f(x) = a]\text{.}
\end{equation*}
Note that $\widetilde{f}$ inherits the monotonicity of $f$ in the following manner.
\begin{definition}\label{def:a-monotone}
    We say $f \colon [q]^n \to \{0,1\}$ is \emph{$a$-monotone} if $x \leq_a y$ implies $f(x) \leq f(y)$.
\end{definition}
Indeed, $\widetilde{f}$ is $a$-monotone. Assume $a = 0$. Therefore, to prove \eqref{eqn:main}, it suffices to prove that there exists some universal constant $C = C(q)$ such that if $f \colon [q]^n \to \{0,1\}$ is symmetric and $0$-monotone, then 
\begin{equation}\label{eqn:main-specific}
    \gamma\left(\left\{\mu \in \Delta[q] \;\bigg\vert\; \varepsilon \leq \mathbb{E}_{x\sim \mu^{\otimes n}}\left[f(x)\right] \leq 1-\varepsilon\right\}\right) \leq \frac{C(\log(1-\varepsilon)-\log(\varepsilon))}{\log n}\text{.}
\end{equation}
We sketch a brief outline of the proof. Denote the region under consideration by
\begin{align*}
    \mathscr{D} &\coloneqq \{\mu \in \Delta[q] \;\vert\;\varepsilon \leq \mathbb{E}_{x \sim \mu^{\otimes n}}[f(x)] \leq 1-\varepsilon\}\text{.}
\end{align*}
We consider probability measures in $\Gamma \coloneqq \{\mu \in \Delta[q] \;\vert\;\mu(0)=0\}$, and for each $\mu \in \Gamma$ we write $\mu_t \coloneqq t\delta_0 + (1-t)\mu$ and consider the set of measures $\{\mu_t\;\vert\;t \in [0,1]\}$. Clearly
\begin{equation*}
    \bigcup_{\mu \in \Gamma}{\{\mu_t\;\vert\;t\in[0,1]\}} = \Delta[q]\text{,}
\end{equation*}
so for each measure $\mu \in \Gamma$ we bound the length of the interval $\{t \in [0,1]\;\vert\;\mu_t \in \mathscr{D}\}$. To do this, we show that $\mathbb{E}_{x\sim\mu_t^{\otimes n}}[f(x)]$ varies quickly as a function of $t$ by establishing a lower bound on $d\mathbb{E}_{x\sim\mu_t^{\otimes n}}[f(x)]/dt$. This lower bound on the derivative depends on the second-smallest atom $\alpha = \min_{j \in [q]\setminus\{0\}}\mu(j)$ of $\mu$ (since $\mu(0) = 0$ is the smallest). To this end, we partition $\Gamma$ into $q-1$ regions $R_i = \{\mu \in \Gamma \;\vert\;\mu(i) = \alpha\}$ based on the second-smallest atom and show that the length of the interval $\{t\in[0,1]\;\vert\;\mu_t\in\mathscr{D}\}$ is small for each region, with only a dependence on $\alpha$. Finally, we remove the dependence on $\alpha$ and show that
\begin{equation*}
    \gamma\left(\{\mu_t\;\vert\;\mu \in R_i\;\text{and}\;t\in[0,1]\} \cap \mathscr{D}\right) \leq \frac{C(\ln(1-\varepsilon)-\ln(\varepsilon))}{\log n}
\end{equation*}
for some constant $C$ depending only on $q$. This proves \eqref{eqn:main-specific} since it holds for all regions $R_i$.

\subsection{Generalization of Russo-Margulis}

Here, we show a generalization of the Russo-Margulis lemma. This follows along the same lines as in \cite{KalaiMossel2015}, but we include it for the sake of completeness. Define the region $\Gamma \coloneqq \{\mu \in \Delta[q]\;\vert\;\mu(0) = 0\}$.

\begin{lemma}\label{lemma:russo-margulis-one-variable}
    Let $f \colon [q] \to \{0,1\}$ be a $0$-monotone function. Let $\mu \in \Gamma$ be some probability measure and define $\mu_t = t\delta_0 + (1-t)\mu$ for $t \in [0,1]$. Then
    \begin{align*}
        \frac{d\mathbb{E}_{x \sim \mu_t}[f(x)]}{dt} &=  \frac{\mathbb{I}[f\;\text{is not constant}]\mathbb{E}_{x\sim\mu_t}[1-f(x)]}{1-t}\text{.}
    \end{align*}
\end{lemma}
\begin{proof}
    Note $d\mathbb{E}_{x \sim \mu_t}[f(x)]/dt = f(0) - \mathbb{E}_{x \sim \mu}[f(x)]$ and that this is $0$ when $f$ is constant. If $f$ is not constant, then $f(0) = 1$ since $f$ is $0$-monotone and the expression becomes $\mathbb{E}_{x\sim\mu}[1-f(x)]$. But $\mu = (\mu_t-t\delta_0)/(1-t)$ and $\mathbb{E}_{x\sim \delta_0}[1-f(x)] = 0$ since $f(0) = 1$, so 
    \begin{equation*}
        \mathbb{E}_{x\sim\mu}[1-f(x)] = \frac{\mathbb{E}_{x\sim \mu_t}[1-f(x)]-t\mathbb{E}_{x\sim\delta_0}[1-f(x)]}{1-t} = \frac{\mathbb{E}_{x\sim\mu_t}[1-f(x)]}{1-t}\text{,}
    \end{equation*}
    as required.
\end{proof}

\begin{lemma}\label{lemma:russo-margulis-generalized}
    Let $f \colon [q]^n \to \{0,1\}$ be a $0$-monotone function. Let $\mu \in \Gamma$ be some probability measure and define $\mu_t = t\delta_0 + (1-t)\mu$ for $t \in [0,1]$. Then
    \begin{equation*}
        \frac{d\mathbb{E}_{x\sim \mu_t^{\otimes n}}[f(x)]}{dt} = \frac{1}{1-t}\sum_{k=1}^{n}{\mathbb{E}_{x_{-k}\sim \mu_t^{\otimes (n-1)}}[\mathbb{I}[f_k^x\;\text{is not constant}]\mathbb{E}_{x_k \sim \mu_t}[1-f_k^x(x_k)]]}\text{.}
    \end{equation*}
\end{lemma}
\begin{proof}
    By the product rule and Lemma \ref{lemma:russo-margulis-one-variable},
    \begin{align*}
        \frac{d\mathbb{E}_{x\sim \mu_t^{\otimes n}}[f(x)]}{dt} &= \sum_{x\in[q]^n}{\sum_{k=1}^{n}{\frac{d\mu_t(x_k)}{dt}\prod_{\ell \neq k}{\mu_t(x_\ell)}}f(x)} \\
        &= \sum_{k=1}^{n}{\sum_{x_k \in [q]}\mathbb{E}_{x_{-k}\sim \mu_t^{\otimes (n-1)}}\left[\frac{d\mu_t(x_k)}{dt}f_k^x(x_k)\right]} \\
        &= \sum_{k=1}^{n}{\mathbb{E}_{x_{-k}\sim\mu_t^{\otimes (n-1)}}\left[\frac{d\mathbb{E}_{x_k\sim \mu_t}[f_k^x(x_k)]}{dt}\right]} \\
        &= \frac{1}{1-t}\sum_{k=1}^{n}{\mathbb{E}_{x _{-k}\sim \mu_t^{\otimes (n-1)}}[\mathbb{I}[f_k^x\;\text{is not constant}]\mathbb{E}_{x_k \sim \mu_t}[1-f_k^x(x_k)]]}\text{,}
    \end{align*}
    as required.
\end{proof}

\subsection{A Lower Bound on the Derivative}

Henceforth, we assume that $f$ is not constant and that $q > 2$ since the $q = 2$ case follows from Friedgut and Kalai \cite{FriedgutKalai1996} (see Theorem \ref{thm:friedgut-kalai}). In the sequel, $C$ denotes different constants at different lines, depending on $q$ only.

In this section, we prove the following.
\begin{proposition}\label{prop:lower-bound-on-derivative}
    Let $f \colon [q]^n \to \{0,1\}$ be a $0$-monotone and symmetric function and let $\mu \in \Gamma$ with second-smallest atom $\alpha = \min_{j \in [q]\setminus\{0\}}{\mu(j)}$. Define the measure $\mu_t = t\delta_0 + (1-t)\mu$ for $t \in [0,1]$. There exists an absolute constant $C$ such that
    \begin{equation*}
        \frac{d\mathbb{E}_{x\sim\mu_t^{\otimes n}}[f(x)]}{dt}\geq \frac{C\mathbb{E}_{x\sim\mu_t^{\otimes n}}[f(x)](1-\mathbb{E}_{x\sim\mu_t^{\otimes n}}[f(x)])\log n}{\log(1/\alpha)}\text{.}
    \end{equation*}
\end{proposition}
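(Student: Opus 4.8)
The plan is to bound the Russo--Margulis expression of Lemma~\ref{lemma:russo-margulis-generalized} below by a sum of $\Ent$-influences and then feed it into Keller's generalized BKKKL theorem (Theorem~\ref{thm:keller-bkkkl}) with $h=\Ent$. We may assume $q\ge 3$ (the case $q=2$, where $\Gamma=\{\delta_1\}$, is Theorem~\ref{thm:friedgut-kalai}), so $\alpha\le 1/(q-1)\le 1/2$ and $\log(1/\alpha)\ge\log 2$; we may also assume $\alpha>0$, since otherwise the right side of the claimed inequality is $0$ while $\E_{x\sim\mu_t^{\otimes n}}[f(x)]$ is non-decreasing in $t$ by $0$-monotonicity. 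Likewise we take $t<1$, the endpoint $t=1$ forcing $\E_{x\sim\mu_t^{\otimes n}}[f(x)]=f(0,\ldots,0)=1$ and hence a vanishing right side.

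Since Theorem~\ref{thm:keller-bkkkl} is stated over the cube $[0,1]^n$, I would first transfer there. Partition $[0,1]$ into intervals $J_0,\ldots,J_{q-1}$ with $\lambda(J_j)=\mu_t(j)$, let $\phi\colon[0,1]\to[q]$ send $J_j$ to $j$, and set $g(y)=f(\phi(y_1),\ldots,\phi(y_n))$. Then $g$ under Lebesgue measure has the same law as $f$ under $\mu_t^{\otimes n}$, so $\Var(g)=\E_{x\sim\mu_t^{\otimes n}}[f]\bigl(1-\E_{x\sim\mu_t^{\otimes n}}[f]\bigr)$, the fibre average $\E[g_k^y]$ equals $\E_{x_k\sim\mu_t}[f_k^x(x_k)]$ with $x=\phi(y)$, and $g$ inherits a transitive symmetry group from $f$. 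Applying Theorem~\ref{thm:keller-bkkkl} with $h=\Ent$ (which satisfies $h\ge\Ent$) and using that all $\Ent$-influences of a symmetric function coincide, I obtain $\sum_{k=1}^{n}I_k^{\Ent}(g)=n\,I_1^{\Ent}(g)\ge c\,\Var(g)\log n$, that is,
\[
\sum_{k=1}^{n}\E_{x\sim\mu_t^{\otimes n}}\!\left[\Ent\!\left(\E_{x_k\sim\mu_t}[f_k^x(x_k)]\right)\right]\ \ge\ c\,\E_{x\sim\mu_t^{\otimes n}}[f]\bigl(1-\E_{x\sim\mu_t^{\otimes n}}[f]\bigr)\log n .
\]

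The heart of the argument is a pointwise comparison: for every coordinate $k$ and every assignment of the remaining coordinates, writing $p=\E_{x_k\sim\mu_t}[f_k^x(x_k)]$,
\[
\frac{1}{1-t}\,\mathbb{I}[f_k^x\ \text{is not constant}]\,\E_{x_k\sim\mu_t}[1-f_k^x(x_k)]\ \ge\ \frac{c'}{\log(1/\alpha)}\,\Ent(p) .
\]
If $f_k^x$ is constant, both sides vanish since then $p\in\{0,1\}$. If $f_k^x$ is not constant, $0$-monotonicity forces $f_k^x(0)=1$ and $f_k^x(j)=0$ for some $j\ne 0$; as $\mu(j)\ge\alpha$ this gives $p\in(0,1)$ and $1-p=\E_{x_k\sim\mu_t}[1-f_k^x(x_k)]\ge\mu_t(j)\ge(1-t)\alpha$. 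Combining this lower bound on $1-p$ with the elementary estimates $\Ent(p)\le\log 2$ (used when $p\le 1/2$, where also $1-p\ge 1/2$) and $\Ent(p)\le(1-p)\bigl(1+\log\frac1{1-p}\bigr)$ (used when $p\ge 1/2$, which follows from $-\log p\le\frac{1-p}{p}$), together with $s\log\frac1s\le\frac1e$ for $s\in(0,1]$, one checks that the displayed inequality holds with, say, $c'=1/4$. The one delicate regime is $p$ near $1$: there the bound $1-p\ge(1-t)\alpha$ introduces a term $\log\frac1{1-t}$, and one must verify that the prefactor $\frac1{1-t}$ on the left absorbs it — this works because $s\bigl(1+\log\frac1s+\log\frac1\alpha\bigr)\le 2+\log\frac1\alpha\le 4\log\frac1\alpha$ for all $s\in(0,1]$, using $\log(1/\alpha)\ge\log 2$.

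Finally I would assemble: summing the pointwise inequality over $k$, taking expectations under $\mu_t^{\otimes n}$, identifying the left-hand side with $d\E_{x\sim\mu_t^{\otimes n}}[f(x)]/dt$ via Lemma~\ref{lemma:russo-margulis-generalized}, and then invoking the displayed Keller bound,
\[
\frac{d\E_{x\sim\mu_t^{\otimes n}}[f(x)]}{dt}\ \ge\ \frac{c'}{\log(1/\alpha)}\sum_{k=1}^{n}\E_{x\sim\mu_t^{\otimes n}}\!\left[\Ent\!\left(\E_{x_k\sim\mu_t}[f_k^x(x_k)]\right)\right]\ \ge\ \frac{cc'}{\log(1/\alpha)}\,\E_{x\sim\mu_t^{\otimes n}}[f]\bigl(1-\E_{x\sim\mu_t^{\otimes n}}[f]\bigr)\log n ,
\]
which is the claim with $C=cc'$. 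I expect the main obstacle to be exactly the pointwise comparison — in particular the decision to use $h=\Ent$ rather than the crude non-constancy influence $h=\mathbb{I}[t\in(0,1)]$ behind the classical BKKKL theorem, which would only produce a linear dependence on $\alpha$ and hence the weaker $\log\log n/\log n$ bound of Kalai and Mossel, together with the control of the near-boundary regime $p\to 1$.
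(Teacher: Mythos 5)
Your proof is correct and matches the paper's in all essential ingredients: Lemma~\ref{lemma:russo-margulis-generalized}, the lower bound $\alpha(1-t) \le \E_{x_k\sim\mu_t}[1-f_k^x(x_k)]$ on non-constant fibres (the paper's Lemma~\ref{lemma:bounding-alpha}), Keller's $h$-BKKKL theorem, and the elementary estimate $(1-t)\bigl(1+\log\tfrac{1}{(1-t)\alpha}\bigr) \le C\log(1/\alpha)$. The only difference is bookkeeping: you perform the comparison to $\Ent$ pointwise on each fibre and then apply Keller with $h=\Ent$, whereas the paper multiplies $\Phi_k(\mu_t)$ by $2+2\log(1/\alpha_t)$, identifies the resulting quantity with an $I_k^h$-influence for the dominating function $h(t)=2\,\mathbb{I}[t\in(0,1)](1-t)(1-\log(1-t))$, applies Keller, and only afterwards absorbs the $(1-t)$ and $\alpha_t$ dependence.
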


Before we prove the proposition, we need the following lemma.
\begin{lemma}\label{lemma:bounding-alpha}
    Let $f \colon [q]^n \to \{0,1\}$ be a $0$-monotone function, let $\mu \in \Gamma$ with second-smallest atom $\alpha = \min_{j \in [q]\setminus\{0\}}{\mu(j)}$, and let $\mu_t = t\delta_0 + (1-t)\mu$ for $t \in [0,1]$. If on any fibre $s_k(x)$ the function $f_k^x \colon [q]\to\{0,1\}$ is not constant, then
    \begin{equation*}
        \alpha(1-t) \leq \mathbb{E}_{x_k\sim\mu_t}[1-f_k^x(x_k)]\text{.}
    \end{equation*}
\end{lemma}
\begin{proof}
    Fix $\mu \in \Gamma$ and any fibre $s_k(x)$. Note that $f_k^x(0) = 1$ by $0$-monotonicity and $f_k^x(j) = 0$ for some $j \in [q] \setminus \{0\}$ since $f_k^x$ is not constant. Therefore,
    \begin{equation*}
        \mathbb{E}_{x_k \sim \mu_t}[1-f_k^x(x_k)] \geq \mu_t(j)(1-f_k^x(j)) \geq \alpha(1-t)\text{,}
    \end{equation*}
    as required.
\end{proof}

Now we prove Proposition \ref{prop:lower-bound-on-derivative}.
\begin{proof}[Proof of Proposition \ref{prop:lower-bound-on-derivative}]
    Let $\alpha_t = \alpha(1-t)$. By Lemma \ref{lemma:bounding-alpha} it follows that
    \begin{equation}\label{eqn:bounding-log-1/alpha}
        \log\left(\frac{1}{\alpha_t}\right) \geq \log\left(\frac{1}{\mathbb{E}_{x_k\sim\mu_t}[1-f_k^x(x_k)]}\right)\text{.}
    \end{equation}
    Let
    \begin{equation*}
        \Phi_k(\mu_t) = \mathbb{E}_{x_{-k}\sim\mu_t^{\otimes (n-1)}}[\mathbb{I}[f_k^x\;\text{is not constant}]\mathbb{E}_{x_k\sim\mu_t}[1-f_k^x(x_k)]]\text{.}
    \end{equation*}
    Then by \eqref{eqn:bounding-log-1/alpha} we obtain
    \begin{align*}
        \left(2+2\log\left(\frac{1}{\alpha_t}\right)\right)\Phi_k(\mu_t) &= \mathbb{E}_{x_{-k}\sim\mu_t^{\otimes (n-1)}}\bigg[2\mathbb{I}[f_k^x\;\text{is not constant}] \bigg(\mathbb{E}_{x_k\sim\mu_t}[1-f_k^x(x_k)] \\
        &\quad+\log\left(\frac{1}{\alpha_t}\right)\mathbb{E}_{x_k\sim\mu_t}[1-f_k^x(x_k)]\bigg)\bigg] \\
        &\geq \mathbb{E}_{x_{-k}\sim\mu_t^{\otimes (n-1)}}\bigg[2\mathbb{I}[f_k^x\;\text{is not constant}] \bigg(\mathbb{E}_{x_k\sim\mu_t}[1-f_k^x(x_k)] \\
        &\quad+\log\left(\frac{1}{\mathbb{E}_{x_k\sim\mu_t}[1-f_k^x(x_k)]}\right)\mathbb{E}_{x_k\sim\mu_t}[1-f_k^x(x_k)]\bigg)\bigg]\text{.}
    \end{align*}
    Under the measure $\mu_t^{\otimes n}$ we naturally transform $f$ to a function $F \colon [0,1]^n \to \{0,1\}$ as follows. Define $G \colon [0,1] \to [q]$ by
    \begin{equation*}
        G(x) = i\quad\text{if}\;x\in\left[\sum_{\ell < i}{\mu_t(\ell)},\sum_{\ell \leq i}{\mu_t(\ell)}\right) \qquad \text{and} \qquad G(1) = q-1\text{.}
    \end{equation*}
    Then define $F \colon [0,1]^n \to \{0,1\}$ by
    \begin{equation*}
        F(x_1,\ldots,x_n) = f(G(x_1),\ldots,G(x_n))\text{.}
    \end{equation*}
    It follows that
    \begin{align*}
        \left(2+2\log\left(\frac{1}{\alpha_t}\right)\right)\Phi_k(\mu_t) &\geq \mathbb{E}_{x_{-k}\in[0,1]^{n-1}}\bigg[2\mathbb{I}[F_k^x\;\text{is not constant}]\bigg(\mathbb{E}_{x_k \in [0,1]}[1-F_k^x(x_k)] \\
        &\quad +\log\left(\frac{1}{\mathbb{E}_{x_k\in[0,1]}[1-F_k^x(x_k)]}\right)\mathbb{E}_{x_k\in[0,1]}[1-F_k^x(x_k)]\bigg)\bigg]\text{.}
    \end{align*}
    Define the function $h \colon [0,1] \to \mathbb{R}$ by
    \begin{equation*}
        h(t) = 2\mathbb{I}[t \in (0,1)]\left(1-t\right)(1-\log(1-t))\text{.}
    \end{equation*}
    We verify that $h(t)\ge \Ent(t)$ for all $t \in [0,1]$. Indeed, for $t\in\{0,1\}$ both sides are zero. For $t\in(0,1)$, note that $\ln(1+\delta) \leq \delta$ for all $\delta \geq 0$. Take $\delta = 1/t-1$ gives $\ln(1/t) \leq 1/t-1$, so $t\ln(1/t) \leq 1-t$. Therefore, $-t\log t \le (\log e)(1-t)$, and
    \begin{equation*}
        \Ent(t) \leq (\log e)(1-t) -(1 -t)\log(1-t) \leq 2(1-t)(1-\log(1-t)) = h(t)\text{.}
    \end{equation*}

    By Theorem \ref{thm:keller-bkkkl} and the symmetry of $F$ there exists some universal constant $C$ such that
    \begin{align*}
        \left(2+2\log\left(\frac{1}{\alpha_t}\right)\right)\frac{d\mathbb{E}_{x\sim\mu_t^{\otimes n}}[f(x)]}{dt} &= \frac{1}{1-t}\sum_{k=1}^{n}{\left(2+2\log\left(\frac{1}{\alpha_t}\right)\right)\Phi_k(\mu_t)} \\
        &\geq \frac{1}{1-t}\sum_{k=1}^{n}{I_k^h(F)} \\
        &\geq \frac{C\Var(F)\log n}{1-t} \\
        &= \frac{C\Var_{\mu_t^{\otimes n}}(f)\log n}{1-t}\text{.}
    \end{align*}
    As such,
    \begin{align*}
        \frac{d\mathbb{E}_{x\sim\mu_t^{\otimes n}}[f(x)]}{dt} &\geq \frac{C\mathbb{E}_{x\sim\mu_t^{\otimes n}}[f(x)](1-\mathbb{E}_{x\sim\mu_t^{\otimes n}}[f(x)])\log n}{(1-t)(1+\log(1/\alpha_t))} \\&\geq \frac{C\mathbb{E}_{x\sim\mu_t^{\otimes n}}[f(x)](1-\mathbb{E}_{x\sim\mu_t^{\otimes n}}[f(x)])\log n}{\log(1/\alpha)}\text{,}
    \end{align*}
    where the last inequality follows from
    \begin{equation*}
        (1-t)(1+\log(1/\alpha_t)) \leq 4\log(1/\alpha)\text{.}
    \end{equation*}
    Indeed, since $\alpha \leq 1/2$ we have $1 \leq \log(1/\alpha_t)$, so
    \begin{align*}
        (1-t)(1+\log(1/\alpha_t)) \leq 2(1-t)\log(1/\alpha_t) = 2(1-t)\log(1/\alpha) + 2(1-t)\log(1/(1-t))\text{.}
    \end{align*}
    The result follows since the first summand is at most $2\log(1/\alpha)$, and the second summand is at most $2\Ent(t) \leq 2 \leq 2\log(1/\alpha)$.
\end{proof}

\subsection{Line Segments Have Short Length}

Proposition \ref{prop:lower-bound-on-derivative} shows that the derivative $d\mathbb{E}_{x\sim\mu_t^{\otimes n}}[f(x)]/dt$ is large with a dependence on the second-smallest atom $\alpha$ of $\mu$. We partition $\Gamma$ into $q-1$ regions
\begin{equation*}
    R_i = \left\{\mu\in \Gamma\;\bigg\vert\;\mu(i) = \min_{j \in [q]\setminus\{0\}}{\mu(j)}\right\}
\end{equation*}
for $i \in \{1,\ldots,q-1\}$. In other words, $R_i$ is the set of all measures $\mu$ in $\Gamma$ such that the second-smallest atom of $\mu$ is $\mu(i)$. Fix a region $R_i$ and suppose that $\mu \in R_i$. Then $\alpha = \mu(i)$ by definition. Recall that
\begin{equation*}
    \mathscr{D} \coloneqq \{\mu \in \Delta[q] \;\vert\;\varepsilon \leq \mathbb{E}_{x\sim \mu^{\otimes n}}[f(x)]\leq 1-\varepsilon\}\text{.}
\end{equation*}
In this section we show the following. Again, $C$ denotes different constants at different lines, depending on $q$ only. Here, $\len(I)$ denotes the length of an interval $I$.
\begin{proposition}\label{prop:length-of-line}
    Let $f \colon [q]^n \to \{0,1\}$ be a $0$-monotone and symmetric function, and let $\mu \in R_i$. Define $\mu_t = t\delta_0 + (1-t)\mu$ for $t \in [0,1]$. There exists a universal constant $C$ such that
    \begin{equation*}
        \len(\{t\in [0,1]\;\vert\;\mu_t \in \mathscr{D}\})\leq \frac{C(\ln(1/2)-\ln(\varepsilon))\log(1/\mu(i))}{\log n}
    \end{equation*}
\end{proposition}
\begin{proof}
    Fix $\mu \in R_i$. By Proposition \ref{prop:lower-bound-on-derivative}, there exists a constant $C$ such that
    \begin{equation}\label{eqn:lower-bound-on-derivative}
        \frac{d\mathbb{E}_{x\sim\mu_t^{\otimes n}}[f(x)]}{dt}\geq \frac{C\mathbb{E}_{x\sim\mu_t^{\otimes n}}[f(x)](1-\mathbb{E}_{x\sim\mu_t^{\otimes n}}[f(x)])\log n}{\log(1/\mu(i))}\text{.}
    \end{equation}
    If $\{t\in[0,1]\;\vert\;\mu_t \in \mathscr{D}\} = \varnothing$, then there is nothing to prove. Otherwise, by monotonicity, there exists $s \in [0,1]$ such that $\mathbb{E}_{x\sim\mu_s^{\otimes n}}[f(x)] = 1 - \varepsilon$. Let $r = \inf\left\{t\in[0,1]\;\Big\vert\;\mathbb{E}_{x\sim\mu_t^{\otimes n}}[f(x)] \geq 1/2\right\}$. Observe that for all $t \geq r$, we have $\mathbb{E}_{x\sim\mu_t^{\otimes n}}[f(x)] \geq 1/2$, so \eqref{eqn:lower-bound-on-derivative} implies
    \begin{equation}\label{eqn:log-derivative}
        \frac{d}{dt} \ln\left[\mathbb{E}_{x\sim\mu_t^{\otimes n}}[1-f(x)]\right] \leq  -\frac{C\log n}{\log(1/\mu(i))}\text{.}
    \end{equation}
    Integrating, we get that  
    \begin{equation*}
        \ln(\varepsilon) - \ln(1/2) \leq  \int_{r}^{s} \frac{d}{dt}  \ln\left[\mathbb{E}_{x\sim\mu_t^{\otimes n}}[1-f(x)]\right] \,dt \leq -\frac{C\log n}{\log(1/\mu(i))}(s-r)\text{,}
    \end{equation*}
    which implies
    \begin{equation*}
        s-r \leq C \, \frac{\ln(1/2)-\ln(\varepsilon)}{\log n}\log(1/\mu(i))\text{.}
    \end{equation*}
    Similarly, let $p = \inf\left\{t\in[0,1]\;\Big\vert\;\mathbb{E}_{x\sim\mu_t^{\otimes n}}[f(x)] \geq \varepsilon\right\}$. If $p = r$, then there is nothing more to prove. Otherwise, for all $t \in [p, r)$, we have $\mathbb{E}_{x\sim\mu_t^{\otimes n}}[f(x)] < 1/2$, so \eqref{eqn:lower-bound-on-derivative} implies
    \begin{equation*}
        \ln(1/2)-\ln(\varepsilon) \geq \int_{p}^{r}
        \frac{d}{dt}\ln\left[\mathbb{E}_{x\sim\mu_t^{\otimes n}}[f(x)]\right] \,dt \geq C \, \frac{\log n}{\log(1/\mu(i))}(r-p)\text{.}
    \end{equation*}
    This implies that
    \begin{equation*}
        r-p \leq C\, \frac{\ln(1/2)-\ln(\varepsilon)}{\log n}\log(1/\mu(i))\text{.}
    \end{equation*}
    Therefore,
    \begin{equation*}
        s-p \leq C\, \frac{\ln(1/2)-\ln(\varepsilon)}{\log n}\log(1/\mu(i))\text{,}
    \end{equation*}
    which proves the proposition.
\end{proof}

\subsection{Cross Sections Have Small Lebesgue Measure}

Proposition~\ref{prop:length-of-line} shows that, for each fixed probability measure
$\mu \in R_i$, the set of parameters $t\in[0,1]$ for which
$\mu_t \in \mathscr{D}$ has length
\begin{equation*}
    O\left(\frac{\log(1/\mu(i))}{\log n}\right)\text{.}
\end{equation*}
In the next proposition, we show that after integrating over all choices of
$\mu\in R_i$, the total contribution of the family $\{\mu_t \;|\; \mu\in R_i\;\text{and}\; t\in[0,1]\}$ inside $\mathscr{D}$ is only $O(1/\log n)$.

To do this, define the central measure $\mu^{*}\in\Gamma$ by $\mu^{*}(i)={1}/{(q-1)}$ for all $i \in [q] \setminus \{0\},$ and define the boundary face
\begin{equation*}
    \Gamma_i \coloneqq \{\mu\in R_i \;\vert\; \mu(i)=0\}\text{.}
\end{equation*}
For example, Figure~\ref{fig:q=4} depicts $\Gamma$ when $q = 4$, so $\mu^{*}(1)=\mu^{*}(2)=\mu^{*}(3)=1/3$ and $\mu^{*}(0)=0$. 

For each $\eta\in\Gamma_i$, we consider the two-dimensional cross section
consisting of the measures
\begin{equation*}
    \Phi_i(\eta,s,t) \coloneqq  t\delta_0+(1-t)\bigl(s\mu^{*}+(1-s)\eta\bigr)\text{,} \qquad (s,t) \in [0,1]^2\text{.}
\end{equation*}
For any $\eta \in \Gamma_i$, we see that $\Phi_i(\eta,0,0) = \eta$, $\Phi_i(\eta,1,0) = \mu^{*}$, and $\Phi_i(\eta,0,1) = \Phi_i(\eta,1,1) = \delta_0$.
Thus these cross sections foliate the region $\left\{\mu_t \;\vert\; \mu\in R_i\;\text{and}\; t\in[0,1]\right\}$, up to null sets.

\begin{figure}[H]
    \centering
    \begin{tikzpicture}[scale=4, every node/.style={font=\small}]
    
      \pgfmathsetmacro{\h}{sqrt(3)/2}     
      \coordinate (A) at (0,0);           
      \coordinate (B) at (1,0);           
      \coordinate (C) at (0.5,\h);        
    
      \coordinate (G) at (0.5,\h/3);
    
      \coordinate (M) at (0.35,0);

      \fill[gray!20] (A)--(B)--(G)--cycle;
    
      \draw[line width=1.4pt] (A) -- node[midway, below] (gThreeLabel) {$\Gamma_3$} (B);
      \draw[line width=0.7pt] (B) -- node[midway, above right] (gOneLabel) {$\Gamma_1$} (C);
      \draw[line width=0.7pt] (C) -- node[midway, above left] (gTwoLabel) {$\Gamma_2$} (A);
    
      \draw (A)--(G);
      \draw (B)--(G);
      \draw (C)--(G);
    
      \draw[dotted, thick] (M)--(G);
    
      \fill (G) circle (0.03) node[above left=1pt] {$\mu^*$};
      \fill (M) circle (0.025) node[below=2pt] {$\eta$};
    
      \fill (A) circle (0.025) node[below left=2pt] {$\delta_1$};
      \fill (B) circle (0.025) node[below right=2pt] {$\delta_2$};
      \fill (C) circle (0.025) node[above=2pt] {$\delta_3$};
    
    \end{tikzpicture}
    \caption{The region $\Gamma$ when $q = 4$.}
    \label{fig:q=4}
\end{figure}
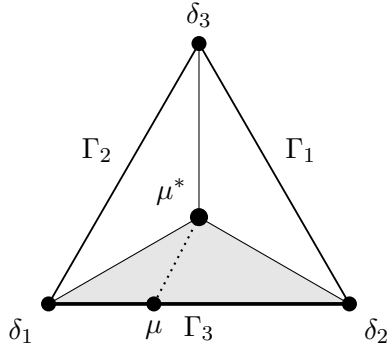

\begin{proposition}
    Let $f \colon [q]^n \to \{0,1\}$ be a $0$-monotone and symmetric function. There exists a constant $C(q)$ such that for any region $R_i$, we have
    \begin{equation*}
        \gamma\left(\{\mu_t\;\vert\;\mu \in R_i\;\text{and}\;t\in[0,1]\} \cap \mathscr{D}\right) \leq C(q)\, \frac{\ln(1-\varepsilon)-\ln(\varepsilon)}{\log n}\text{,}
    \end{equation*}
\end{proposition}
\begin{proof}
    Throughout the proof, $C = C(q)$ denotes different constants at different lines, depending on $q$ only.
    For $\eta \in \Gamma_i$ and $(s,t)\in[0,1]^2$, we define
    \begin{equation*}
        \Phi_i(\eta,s,t)
        \coloneqq
        t\delta_0+(1-t)\bigl(s\mu^{*}+(1-s)\eta\bigr)\text{.}
    \end{equation*}
    For $s>0$, the measure $s\mu^{*}+(1-s)\eta$ belongs to $R_i$, and its $i$th atom is
    \begin{equation*}
        s\mu^{*}(i)+(1-s)\eta(i)=\frac{s}{q-1}\text{.}
    \end{equation*}
    Therefore, by Proposition~\ref{prop:length-of-line}, for every fixed $\eta\in\Gamma_i$ and every $s>0$,
    \begin{equation*}
        \len\left(\left\{t\in[0,1]\;\vert\;\Phi_i(\eta,s,t)\in\mathscr{D}\right\}\right)
        \leq
        C\, \frac{(\ln (1/2) - \ln (\varepsilon))\log((q-1)/s)}{\log n}\text{.}
    \end{equation*}
    The slice $s=0$ is irrelevant for the following integration. We now integrate this estimate over all cross sections. The map $ \Phi_i\colon\Gamma_i\times[0,1]^2\to \Delta[q]$ parametrizes $\{\mu_t\;\vert\; \mu\in R_i\;\text{and}\;t\in[0,1]\}$ up to null sets. Moreover, in affine coordinates its Jacobian is of the form
    \begin{equation*}
        c_q(1-t)^{q-2}(1-s)^{q-3}\text{,}
    \end{equation*}
    and hence the density of the pullback of $\gamma$ under this parametrization is bounded above by a constant depending only on $q$. Consequently, for every measurable set $B$ contained in the image of this parametrization,
    \begin{equation*}
        \gamma(B)
        \leq
        C(q)
        \int_{\Gamma_i}\int_{0}^{1}\int_{0}^{1}
        \mathbb{I}[\Phi_i(\eta,s,t)\in B]\,dt\,ds\,d\eta\text{,}
    \end{equation*}
    where $d\eta$ denotes $(q-3)$-dimensional Lebesgue measure on $\Gamma_i$. Applying this with
    \begin{equation*}
        B=
        \left\{\mu_t\;\vert\;\mu\in R_i\;\text{and}\;t\in[0,1]\right\}\cap\mathscr{D}\text{,}
    \end{equation*}
    and then using the preceding line-segment bound, we get
    \begin{align*}
        \gamma(B)
        &\leq
        C(q)
        \int_{\Gamma_i}\int_0^1
        \len\left(\left\{t\in[0,1]\;\vert\;\Phi_i(\eta,s,t)\in\mathscr{D}\right\}\right)
        \,ds\,d\eta \\
        &\leq
        C(q)\, \frac{\ln (1/2) - \ln(\varepsilon)}{\log n}
        \int_{\Gamma_i}\int_0^1
        \log\left(\frac{q-1}{s}\right)
        \,ds\,d\eta\text{.}
    \end{align*}
    Finally, we note that
    \begin{equation*}
        \int_0^1 \log\left(\frac{q-1}{s}\right)\,ds
        =
        \log(q-1)+\frac{1}{\ln2}
        \leq C(q)\text{,}
    \end{equation*}
    and the total $(q-3)$-dimensional volume of $\Gamma_i$ is also bounded by a constant depending only on $q$. Therefore
    \begin{equation*}
        \gamma\left(
        \left\{\mu_t\;\vert\;\mu\in R_i\;\text{and}\;t\in[0,1]\right\}
        \cap \mathscr{D}
        \right)
        \leq
        C(q)\frac{\ln(1/2)-\ln(\varepsilon)}{\log n} \leq C(q)\frac{\ln(1-\varepsilon)-\ln(\varepsilon)}{\log n}\text{.}
    \end{equation*}
    This proves the proposition.
\end{proof}

\subsection{Tightness}

We remark that Theorem \ref{thm:main} is sharp up to multiplicative constants. Ben-Or and Linial \cite{Ben-OrLinial1990} constructed the tribes function that showed that the $O(\log n/n)$ bound on influences presented by Kahn, Kalai, and Linial \cite{KKL} is sharp. We present a generalized tribes function. Fix $q \geq 2$, and let
\begin{equation*}
    \rho = \frac{2}{q+1}\text{.}
\end{equation*}
Then $\rho > 1/q$. Let $r$ be some positive integer, let $m = \left\lfloor \rho^{-r}\right\rfloor$, and set $n = mr$. It is enough to construct examples for $n$ of this form. Partition the set $\{1,2,\ldots,n\}$ into disjoint sets $T_1,T_2,\ldots,T_m$ of equal size $r$. For $a\in[q]$ and $x\in[q]^n$, define
\begin{equation*}
    N_a(x)=\#\{i\in\{1,\ldots,m\}\;\vert\;x_j=a\text{ for all }j\in T_i\}\text{.}
\end{equation*}
We define $f \colon [q]^n \to [q]$ by
\begin{equation*}
    f(x)=\min\arg\max_{a\in[q]} N_a(x)\text{,}
\end{equation*}
where the minimum is taken with respect to the order $0<1<\cdots<q-1$.

Note that the function $f$ is symmetric in the sense of Definition \ref{def:symmetric}. Indeed, $f$ is invariant under permutations of the blocks and under permutations of the coordinates inside each block, and this group acts transitively on $\{1,\ldots,n\}$. The function $f$ is also monotone. To see this, suppose $x\leq_a y$. Then $ N_a(x)\leq N_a(y)$ and $ N_b(y)\leq N_b(x)$ for every $b\neq a$. Thus, if $f(x)=a$, then $a$ remains a maximizer after passing from $x$ to $y$, and the tie-breaking order still selects $a$. Hence $f(y)=a$.

We now show that this construction gives the correct lower bound. Fix $0<\varepsilon<1/2$, and choose constants $c_-<c_+$ such that
\begin{equation*}
    -\ln(1-\varepsilon)<c_-<c_+<-\ln(\varepsilon)\text{.}
\end{equation*}
For each $r$, define $p_\pm \coloneqq p_\pm(r)$ by
\begin{equation*}
    p_-\coloneqq\left(\frac{c_-}{m}\right)^{1/r}
    \quad\text{and}\quad
    p_+\coloneqq\left(\frac{c_+}{m}\right)^{1/r}\text{.}
\end{equation*}
Since $m=\lfloor\rho^{-r}\rfloor$, we have $p_+-p_-=\log((1-\varepsilon)/\varepsilon)\cdot\Theta(1/r)$, provided that $c_+$ and $c_-$ are sufficiently close to $-\log(\varepsilon)$ and $-\log(1-\varepsilon)$, respectively.  Moreover, $r=\Theta(\log n)$ as $n=mr$. Thus,
\begin{equation*}
    p_+-p_-=\log((1-\varepsilon)/\varepsilon)\cdot \Theta(1/\log n)\text{.}
\end{equation*}
Choose $\delta>0$ small enough so that
\begin{equation*}
    \frac{1-\rho}{q-1}<\rho-2\delta\text{.}
\end{equation*}
This is possible because $\rho>1/q$. First consider $a\in[q]\setminus\{0\}$. Let
\begin{equation*}
    R_a=
    \{\mu\in\Delta[q]\;\vert\;p_-\leq \mu(a)\leq p_+
    \text{ and }\mu(b)\leq \rho-\delta\text{ for all }b\neq a\}\text{.}
\end{equation*}
By the choice of $\delta$, the sections of $R_a$ obtained by fixing $\mu(a)$ have volume bounded above and below by positive constants depending only on $q$. Hence
\begin{equation*}
    \gamma(R_a)=\Theta (p_+-p_-)
    =\log((1-\varepsilon)/\varepsilon) \cdot\Theta(1/\log n)\text{.}
\end{equation*}
For $\mu\in R_a$, set $ \lambda_a=m\mu(a)^r.$ Then $\lambda_a\in[c_-,c_+]$. On the other hand, for $b\neq a$,
\begin{equation*}
    \mathbb{P}_{x\sim\mu^{\otimes n}}[N_b(x) \geq 1] \leq \mathbb{E}_{x\sim\mu^{\otimes n}}[N_b(x)]
    =
    m\mu(b)^r
    \leq m(\rho-\delta)^r
    =o(1)\text{.}
\end{equation*}
By a union bound, no color $b\neq a$ has an all-$b$ block with probability $1-o(1)$, so
\begin{align*}
    \mathbb{P}_{x\sim\mu^{\otimes n}}[f(x)=a]
    &=
    \mathbb{P}_{x\sim\mu^{\otimes n}}[N_a(x)\geq1]+o(1) \\
    &=
    1-(1-\mu(a)^r)^m+o(1) \\
    &=
    1-\exp(-\lambda_a)+o(1)\text{.}
\end{align*}
By the choice of $c_-$ and $c_+$, it follows that for all sufficiently large $r$,
\begin{equation*}
    \varepsilon
    \leq
    \mathbb{P}_{x\sim\mu^{\otimes n}}[f(x)=a]
    \leq
    1-\varepsilon
\end{equation*}
for every $\mu\in R_a$. Consequently,
\begin{equation*}
    \gamma(\{\mu \in \Delta[q]\;\vert\;\varepsilon \leq
    \mathbb{P}_{x\sim\mu^{\otimes n}}[f(x)=a]\leq 1-\varepsilon\})
    \geq
    \log((1-\varepsilon)/\varepsilon) \cdot\Theta (1/\log n)
\end{equation*}
for every $a\in[q]\setminus\{0\}$.

It remains to prove the color $a=0$ since the deterministic tie-breaking rule favors $0$ when no color has a monochromatic block. In this case we use the color $a=1$ as the competing color. Define
\begin{equation*}
    R_0= \{\mu\in\Delta[q]\;\vert\;p_-\leq \mu(1)\leq p_+\;\text{and}\;\mu(b)\leq \rho-\delta\;\text{for all}\;b\neq 1\}\text{.}
\end{equation*}
As before, $ \gamma(R_0)=\log((1-\varepsilon)/\varepsilon) \cdot\Theta(1/\log n)$. For $\mu\in R_0$, set $ \lambda_1=m\mu(1)^r \in [c_-,c_+]$, and note
\begin{equation*}
    \mathbb{E}_{x\sim\mu^{\otimes n}}[N_b(x)]=o(1)
\end{equation*}
for every $b\neq 1$. Thus, with probability $1-o(1)$, the only possible all-monochromatic blocks have color $1$. Hence
\begin{align*}
    \mathbb{P}_{x\sim\mu^{\otimes n}}[f(x)=0] &=
    \mathbb{P}_{x\sim\mu^{\otimes n}}[N_1(x)=0]+o(1) \\
    &= (1-\mu(1)^r)^m+o(1) \\
    &= \exp(-\lambda_1)+o(1)\text{.}
\end{align*}
Again, by the choice of $c_-$ and $c_+$, for all sufficiently large $r$,
\begin{equation*}
    \varepsilon \leq \mathbb{P}_{x\sim\mu^{\otimes n}}[f(x)=0] \leq 1-\varepsilon
\end{equation*}
for every $\mu\in R_0$. Combining the two cases, for every $a\in[q]$ we obtain
\begin{equation*}
    \gamma(\{\mu \in \Delta[q]\;\vert\;\varepsilon \leq
    \mathbb{P}_{x\sim\mu^{\otimes n}}[f(x)=a]\leq 1-\varepsilon\})
    =
    \log((1-\varepsilon)/\varepsilon) \cdot\Theta(1/\log n)\text{,}
\end{equation*}
where the upper bound follows from Theorem \ref{thm:main}. This proves that the logarithmic dependence in Theorem \ref{thm:main} is sharp.

\bibliographystyle{plain}

\end{document}